\documentclass[11pt,oneside,english]{amsart}
\usepackage{subfiles, mathtools}
\usepackage{amssymb}
\usepackage[colorlinks]{hyperref}
\usepackage{graphicx}
\usepackage[pagewise]{lineno}
\usepackage{ulem} 

\makeatletter \theoremstyle{plain}
 \newtheorem{thm}{Theorem}[section]
 \newtheorem{lem}[thm]{Lemma}
 \newtheorem{prop}[thm]{Proposition}
 
 \numberwithin{equation}{section} 
 \numberwithin{figure}{section} 
 \theoremstyle{plain}
 \theoremstyle{definition}
 \newtheorem{defn}[thm]{Definition}
 \newtheorem{rem}[thm]{Remark}
  \newtheorem{cor}[thm]{Corollary}
  \newtheorem{ex}[thm]{Example}

\newcommand{\calC}{{{\mathcal C}}}

\newcommand{\fH}{{{\mathfrak H}}}

\newcommand{\bH}{{{\bf H}}}

\newcommand{\Aa}{{\mathcal {AA}}}

\newcommand{\calH}{{{\mathcal H}}}

\newcommand{\calQ}{{{\mathcal Q}}}

\newcommand{\C}{{{\mathbb C}}}

\newcommand{\R}{{{\mathbb R}}}

\newcommand{\mH}{{{\mathbb H}}}

\usepackage{babel}

\makeatother

\providecommand{\keywords}[1]
{
  \small	
  {\textit{Key words and phrases.}} #1
}

\providecommand{\subjclass}[1]
{
  \small	
  {\textit{$2020$ Mathematics Subject Classifications.}} #1
}

\begin{document}

\title [TW curvatures in 3D Lie groups]{Tanaka-Webster curvatures of surfaces in 3D Lie groups with a CR structure}

\author{Ioannis D. Platis \& Dimitrios Tsolis}
\keywords{Tanaka-Webster connection, Sub-Riemannian geometry, Contact manifolds, CR structures, 3D Lie groups, Hypersurfaces}
\subjclass{53C17, 53D10}
\thanks{The first author was supported by the Medicus program, Grant Nr. 83765.}

\newcommand{\Addresses}{{
  \bigskip
  \footnotesize

  I.D.~Platis, \textsc{Department of Mathematics, University of Patras, Panepistimioupolis, 26504 Rion, Achaia, Greece}\par\nopagebreak
  \textit{E-mail address}, I.D.~Platis: \texttt{idplatis@upatras.gr}

  \medskip
  
  D.~Tsolis, \textsc{Department of Mathematics, University of Patras, Panepistimioupolis, 26504 Rion, Achaia, Greece}\par\nopagebreak
  \textit{E-mail address}, D.~Tsolis: \texttt{d.tsolis@upatras.gr}

}}

\date{\today}

\begin{abstract}
We consider the Tanaka-Webster geometry of surfaces embedded in a 3-dimensional Lie group with a CR structure inherited by a contact form. We define the notions of Gauss and mean curvature and give specific examples.
\end{abstract}

\maketitle

\centerline{\it Dedicated to Sorin Dragomir.
}


\section{Introduction}

The study of surfaces (and submanifolds in a more general way) within the Cauchy-Riemann (CR) manifold area elegantly bridges complex analysis, contact geometry, and sub-Riemannian geometry; for standard references, see~\cite{DT-CR} and~\cite{Bejancu1986}. In this paper, we focus on a specific 3-dimensional ambient space: a Lie group $G$ equipped with a left-invariant contact form $\theta$ and its associated CR structure. To investigate the differential geometry of this space, we rely on the Webster metric and the Tanaka-Webster connection~\cite{Tanaka1975}. This connection is the unique linear connection that naturally preserves both the metric and the contact form, while maintaining the Reeb vector field and the complex structure parallel.  

Our primary goal is to explore the differential geometry of smooth hypersurfaces (surfaces) immersed in these Lie groups. By utilising the horizontal and Webster gradients, we establish a specialised, adapted orthonormal moving frame at the non-characteristic points of the surface. This framework allows us to directly project the Tanaka-Webster connection onto the surface and derive its structural equations. From this foundation, we construct the Webster second fundamental form, which is notably non-symmetric, and use it to formally define the Tanaka-Webster (TW) mean curvature and the TW Gauss curvature of the surface.  

The paper is organised as follows: In Section \ref{sec:prel}, we present the necessary geometric background on contact 3-manifolds, the Webster metric, and the structure of contact 3-dimensional Lie groups. Section \ref{sec-TW-hyp} details the TW-geometry of surfaces in $G$, constructing moving frames, deriving structural equations, and formalising the Gaussian and mean curvatures. In Section \ref{sec-comp-H}, we apply these tools to the Heisenberg group $\mathbb{H}$, performing explicit computations for graphs, tilted planes, and surfaces of revolution. Section \ref{sec-comp-aa} offers similar, detailed computations for surfaces embedded in the affine additive group $\Aa$. Finally, in Section \ref{sec-sasaki}, we briefly discuss the Sasakian structure in G and examine the behaviour of curvatures under Riemannian approximations.

\section{Preliminaries}\label{sec:prel}
In this section, we review the fundamental concepts and geometric structures that underpin our analysis. In Section \ref{sec:contact}, we recall the definitions of contact 3-manifolds, the associated CR-structure, the Webster metric, and the properties of the Tanaka-Webster connection. In Section \ref{sec:groups}, we specialise to contact 3-dimensional Lie groups and establish the existence of a canonical left-invariant orthonormal frame. Finally, in Sections \ref{sec:tw-group} and \ref{sec:curv}, we derive the structural equations for the Tanaka-Webster connection in this Lie group setting and compute the corresponding Webster scalar curvature.

\subsection{Contact 3-manifolds and Webster metric}\label{sec:contact}
 Let $M$ be a 3-dimensional smooth manifold with a codimension 1 contact form $\theta$, that is a smooth 1-form satisfying
$$
\theta\wedge d\theta\neq 0,
$$
everywhere in $M$. Due to continuity, we may assume that $\theta\wedge d\theta>0$. The form $\theta$ defines a 2-dimensional horizontal distribution $\mathcal{H}=\ker\theta$ in $M$. Suppose that $\calH$ is spanned by vector fields $X,Y$. The CR-structure $J:\mathcal{H}\to\mathcal{H}$ in $M$ may be defined by setting
$$
JX=Y,\quad JY=-X.
$$
Associated with the contact structure is the Reeb vector field $T$, which is the unique vector field satisfying
$$
\theta(T)=1,\quad d\theta(T,\cdot)=0.
$$
We can extend $J$ to the entire tangent bundle $TM$ by setting $JT=0$. Recall that for a horizontal vector field $V$ we have
$$
0<(\theta\wedge d\theta)(T,V,JV)=\theta(T)d\theta(V,JV)+\theta(V)d\theta(JV,T)+
\theta(JV)d\theta(T,V)=d\theta(V,JV).
$$
\begin{defn}
The {\it Webster metric} $g$ in $M$ is defined by:
\begin{equation}\label{eq:W-metric}
g(X_1,X_2)=d\theta(X_1,JX_2)+\theta(X_1)\theta(X_2),
\end{equation}
for all $X_1,X_2\in TM$.
\end{defn}
For the Webster metric in the broader context of pseudo-Hermitian geometry, see also~\cite{Lee1988}.
\subsubsection{Tanaka-Webster connection}
The {\it Tanaka-Webster connection} $\nabla$ (see \cite{DT-CR}) is the unique linear connection in $M$ that satisfies the following conditions:
\begin{enumerate}
    \item[{1)}] The connection preserves the Webster metric and the contact form, that is
    $$
    \nabla g=0,\quad \nabla\theta=0.
    $$
    \item[{2)}] The Reeb vector field $T$ and the complex structure $J$ are parallel, that is,
    $$
    \nabla T=0,\quad \nabla J=0.
    $$
    \item [{3)}] The torsion tensor ${\rm Tor}$ defined by
    $$
    {\rm Tor}(X_1,X_2)=\nabla_{X_1}X_2-\nabla_{X_2}X_1-[X_1,X_2]
    $$
    satisfies the following:
    \begin{eqnarray*}
    &&
    {\rm Tor}(X_1,X_2)=d\theta(X_1,X_2)T,\quad X_1,X_2\in\mathcal{H},\\
     &&
    {\rm Tor}(T, J X_1)=-J{\rm Tor}(T,X_1),\quad X_1\in\mathcal{H}.
    \end{eqnarray*}
    \item[{4)}]  For $X_1\in\mathcal{H}$, ${\rm Tor}(T,X_1)=A(X_1)$,
    where $A$ is an anti-linear operator (the torsion operator) satisfying $AJ=-JA$.
\end{enumerate}
\subsection{Contact 3-Lie groups}\label{sec:groups}
Let $G$ be a 3-dimensional Lie group with a contact form $\theta$, satisfying
$
\theta\wedge d\theta> 0,
$
everywhere in $G$ and let $T$ be the Reeb vector field. Let $\mathcal{H}=\ker\theta$ and let $X,Y$ be left-invariant vector fields in $G$ such that $\mathcal{H}=\langle X,Y\rangle$. Again, the CR-structure $J:\mathcal{H}\to\mathcal{H}$ in $G$ is defined by setting
$
JX=Y,\quad JY=-X.
$
\begin{prop}
  Let $G$ and $X,Y, T$ be as above. Then there exist $e_1,e_2\in\mathcal{H}$ such that $\{e_1,e_2,T\}$ is an orthonormal frame for the Webster metric, $Je_1=e_2$, $Je_2=-e_1$, $JT=0$ and moreover, there exist $a_i,b_i\in\R$, $i=1,2,3$ such that  
 \begin{equation}\label{eq-a,b}
[e_1,T]=a_1\,e_1+b_1\,e_2, \quad [e_2,T]=a_2\,e_1+b_2\,e_2, \quad [e_1,e_2]=a_3\,e_1+b_3\,e_2-T.
\end{equation}
Moreover, $a_i,b_i$, $i\in\{1,2,3\}$ satisfy
\begin{equation}\label{eq:liebr-const}
   \left|\begin{matrix}
    a_1&b_1\\
    a_3&b_3
\end{matrix}\right|=
\left|\begin{matrix}
    a_2&b_2\\
    a_3&b_3
\end{matrix}\right|=0,\quad a_1+b_2=0.
\end{equation}
\end{prop}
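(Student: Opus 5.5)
The plan is to build the frame by normalising the given left-invariant frame, and then to get the relations \eqref{eq:liebr-const} from the Jacobi identity. I assume throughout, as the setting intends, that $\theta$ is left-invariant. Then $T$ is left-invariant too, since it is determined by $\theta$ through left-invariant conditions.

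\emph{Step 1: the frame.} Since $X,Y,T$ are left-invariant, $d\theta(X,Y)$ is a constant, and it is positive by the computation $(\theta\wedge d\theta)(T,V,JV)=d\theta(V,JV)>0$ applied to $V=X$. I set
$$e_1=\frac{X}{\sqrt{d\theta(X,Y)}},\qquad e_2=Je_1=\frac{Y}{\sqrt{d\theta(X,Y)}}.$$
From \eqref{eq:W-metric} and $\theta(e_i)=0$ one checks:
\begin{itemize}
\item $g(e_1,e_1)=d\theta(e_1,e_2)=1$;
\item $g(e_2,e_2)=d\theta(e_2,-e_1)=1$;
\item $g(e_1,e_2)=d\theta(e_1,-e_1)=0$;
\item $g(T,e_i)=d\theta(T,Je_i)=0$ and $g(T,T)=\theta(T)^2=1$.
\end{itemize}
So $\{e_1,e_2,T\}$ is orthonormal, with $Je_1=e_2$, $Je_2=-e_1$ and $JT=0$.

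\emph{Step 2: the bracket form \eqref{eq-a,b}.} Brackets of left-invariant fields are left-invariant, so they expand in $\{e_1,e_2,T\}$ with constant coefficients. I determine the $T$-components with the formula $d\theta(A,B)=A\theta(B)-B\theta(A)-\theta([A,B])$, using that $\theta(e_i)$ and $\theta(T)$ are constant.
\begin{itemize}
\item $\theta([e_i,T])=-d\theta(e_i,T)=0$, so $[e_i,T]$ is horizontal. This gives the first two relations, with constants $a_1,b_1,a_2,b_2$.
\item $\theta([e_1,e_2])=-d\theta(e_1,e_2)=-1$. This gives the third relation, with horizontal part $a_3e_1+b_3e_2$.
\end{itemize}
The one point to watch is the normalisation convention for $d\theta$. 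If the paper's convention carries a factor $1/2$, I would rescale $e_1,e_2$ by the same constant so that the $T$-coefficient is exactly $-1$, as in \eqref{eq-a,b}, while keeping the frame orthonormal for \eqref{eq:W-metric}. I expect this bookkeeping to be the only delicate point.

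\emph{Step 3: the relations \eqref{eq:liebr-const}.} I apply the Jacobi identity
$$[[e_1,e_2],T]+[[e_2,T],e_1]+[[T,e_1],e_2]=0$$
and expand each term using \eqref{eq-a,b}:
\begin{itemize}
\item $[[e_1,e_2],T]=(a_3a_1+b_3a_2)e_1+(a_3b_1+b_3b_2)e_2$;
\item $[[e_2,T],e_1]=-b_2(a_3e_1+b_3e_2-T)$;
\item $[[T,e_1],e_2]=-a_1(a_3e_1+b_3e_2-T)$.
\end{itemize}
Collecting components:
\begin{itemize}
\item the $T$-component gives $a_1+b_2=0$;
\item the $e_1$-component gives $a_2b_3-a_3b_2=0$;
\item the $e_2$-component gives $a_3b_1-a_1b_3=0$.
\end{itemize}
These are exactly the two determinant conditions and the trace condition in \eqref{eq:liebr-const}.
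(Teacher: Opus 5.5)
Your proof is correct, and the frame is the same as the paper's. Since $d\theta(X,Y)=-c$, where $c$ is the $T$-coefficient of $[X,Y]$, your $e_1=X/\sqrt{d\theta(X,Y)}$ is exactly the paper's $(-c)^{-1/2}X$.

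The difference lies in where the bracket structure comes from. The paper imports from \cite{BPPT} the form of the brackets of $X,Y,T$: constant coefficients, horizontal $[X,T]$ and $[Y,T]$, and $c<0$. It also imports the relations \eqref{eq:liebr-const} for $X,Y$. It then only observes that rescaling by $(-c)^{-1/2}$ leaves $a_1,b_1,a_2,b_2$ unchanged and replaces $a_3,b_3$ by $a_3/\sqrt{-c}$ and $b_3/\sqrt{-c}$. That substitution preserves the homogeneous determinant conditions.

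You instead derive everything in the normalised frame. The $T$-components come from the Cartan formula together with $d\theta(T,\cdot)=0$. The three relations come from the single Jacobi identity; your expansion checks out, with the $T$-, $e_1$- and $e_2$-components giving $a_1+b_2$, $a_2b_3-a_3b_2$ and $a_3b_1-a_1b_3$. This makes the proof self-contained. It also shows that \eqref{eq:liebr-const} is nothing more than the Jacobi identity; the trace condition is equivalently $d(d\theta)=(a_1+b_2)\,\theta_1\wedge\theta_2\wedge\theta=0$.

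One correction concerns your fallback remark about a factor $1/2$ in $d\theta$: it would not work. Under such a convention one has $g(e_1,e_1)=d\theta(e_1,e_2)=-\tfrac12\theta([e_1,e_2])$. Orthonormality would therefore force the $T$-coefficient to be $-2$, and no rescaling of $e_1,e_2$ could change that without destroying orthonormality. The point is moot, since the paper fixes the convention $d\theta(X,Y)=X\theta(Y)-Y\theta(X)-\theta([X,Y])$ explicitly. With that convention your Step~2 gives exactly $-1$.
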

\begin{proof}
  Without the assumption $\theta\wedge d\theta>0$, it is proved in~\cite{BPPT} that there exist constants $a_i,b_i$, $i\in\{1,2,3\}$ and $c< 0$ such that
$$
[X,T]=a_1\,X+b_1\,Y, \quad [Y,T]=a_2\,X+b_2\,Y, \quad [X,Y]=a_3\,X+b_3\,Y+c\,T,
$$ 
and moreover, $a_i,b_i$, $i\in\{1,2,3\}$ and $c$ satisfy Eqs. \eqref{eq:liebr-const}. Now set
    $$
    e_1=(-c)^{-1/2}X,\quad e_2=(-c)^{-1/2}Y.
    $$
    Then the frame $\{e_1,e_2,T\}$ is orthonormal with respect to the Webster metric in $G$. In fact, we have
 $$
d\theta(X,Y)=X\theta(Y)-Y\theta(X)-\theta([X,Y])=-c.
 $$
 Therefore,
 $$
 g(X,X)=d\theta(X,Y)+\theta(X)\theta(X)=-c,
 $$
 and in the same manner, we also have $g(Y,Y)=-c.$ Moreover,
 $$
 g(T,T)=\theta(T)\theta(T)=1,
 $$
 and
 \begin{eqnarray*}
     g(X,Y)&=&d\theta(X,-X)+\theta(X)\theta(Y)=0,\\
     g(X,T)&=&d\theta(X,0)+\theta(X)\theta(T)=0,\\
     g(Y,T)&=&d\theta(Y,0)+\theta(Y)\theta(T)=0.
 \end{eqnarray*}
 Now,
$$
[e_1,T]=a_1e_1+b_1e_2,\quad [e_2,T]=a_2e_1+b_2e_2,\quad [e_1,e_2]=\frac{a_3}{\sqrt{-c}}e_1+
\frac{b_3}{\sqrt{-c}}e_2-T,
$$
where $a_i,b_i$, $i=1,2,3$ and $c$ satisfy \eqref{eq:liebr-const}. We can therefore always assume that $c=-1$ under the assumption $\theta\wedge d\theta>0$ and stick with Eqs. \eqref{eq-a,b}. This concludes the proof.
\end{proof}

\subsection{Structural equations for the Tanaka-Webster connection}\label{sec:tw-group}

We consider the orthonormal frame  $\{e_1,e_2,T\}$ and let $\{\theta_1,\theta_2,\theta\}$ be the dual coframe. We will use the Lie brackets to calculate $d\theta_1,d\theta_2$ and $d\theta$. In the first place,
\begin{eqnarray*}
    d\theta_1(e_1,e_2)&=&-\theta_1([e_1,e_2])=-a_3,\\
    d\theta_1(e_i,T)&=&-\theta_1([e_i,T])=-a_i,\quad i=1,2.
\end{eqnarray*}
Similarly,
\begin{eqnarray*}
    d\theta_2(e_1,e_2)&=&-\theta_2([e_1,e_2])=-b_3,\\
    d\theta_2(e_i,T)&=&-\theta_2([e_i,T])=-b_i,\quad i=1,2,
\end{eqnarray*}
and
\begin{eqnarray*}
    d\theta(e_1,e_2)&=&-\theta([e_1,e_2])=1,\\
    d\theta(e_i,T)&=&-\theta([e_i,T])=0,\quad i=1,2.
\end{eqnarray*}
Therefore,
\begin{eqnarray}
  \notag d\theta_1&=&-a_3\,\theta_1\wedge\theta_2-a_1\,\theta_1\wedge\theta-a_2\,\theta_2\wedge\theta,\\
\label{eq:dth}     d\theta_2&=&-b_3\,\theta_1\wedge\theta_2-b_1\,\theta_1\wedge\theta-b_2\,\theta_2\wedge\theta,\\
\notag d\theta&=&\theta_1\wedge \theta_2.
\end{eqnarray}

\begin{prop}
  (See \cite{E-N}) There exist a connection form $\omega_{12}$ and torsion forms $\tau_i$, $i=1,2$ such that 
  \begin{equation}\label{firs-str}
\left(\begin{matrix}
d\theta_1\\
d\theta_2\\
d\theta
\end{matrix}\right)+
\left(\begin{matrix}
0&\omega_{12}&0\\
\omega_{21}&0&0\\
0&0&0
\end{matrix}\right)\wedge
\left(\begin{matrix}
\theta_1\\
\theta_2\\
\theta
\end{matrix}\right)=
\left(\begin{matrix}
\tau_1\\
\tau_2\\
\theta_1\wedge\theta_2
\end{matrix}\right),\quad -\omega_{12}=\omega=\omega_{21}.
\end{equation}
\end{prop}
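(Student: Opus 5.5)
The plan is to construct $\omega$, and with it $\tau_1,\tau_2$, explicitly from the coframe equations \eqref{eq:dth}. I then check that the connection determined by $\omega$ is the Tanaka-Webster connection. Since $\nabla T=0$ and $\nabla J=0$, with $\nabla$ metric and $Je_1=e_2$, the connection must have the form
$$
\nabla e_1=\omega\otimes e_2,\qquad \nabla e_2=-\omega\otimes e_1,\qquad \nabla T=0
$$
for a single real $1$-form $\omega$. Dually, $\nabla\theta_1=\omega\otimes\theta_2$, $\nabla\theta_2=-\omega\otimes\theta_1$ and $\nabla\theta=0$. This gives the skew matrix in \eqref{firs-str}, with $-\omega_{12}=\omega=\omega_{21}$. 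The third row is simply the last equation of \eqref{eq:dth}. The content of the proposition is therefore that $\omega$ can be chosen so that $\tau_1=dθ_1-\omega\wedge\theta_2$ and $\tau_2=d\theta_2+\omega\wedge\theta_1$ are genuine pseudohermitian torsion forms. By this I mean $\tau_i\in \mathrm{span}\{\theta_1\wedge\theta,\theta_2\wedge\theta\}$ with a coefficient matrix that is symmetric and trace-free, which is the coframe form of conditions 3) and 4).

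I would write $\omega=p\,\theta_1+q\,\theta_2+r\,\theta$ with unknown constants, justified by left-invariance, and substitute \eqref{eq:dth}.

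\textbf{Horizontal torsion.} The $\theta_1\wedge\theta_2$ component of $\tau_1$ is $-a_3-p$, and that of $\tau_2$ is $-b_3-q$. Requiring ${\rm Tor}(X_1,X_2)=d\theta(X_1,X_2)T$ on $\mathcal H$ means these components vanish, which forces $p=-a_3$ and $q=-b_3$.

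\textbf{Vertical torsion.} What remains is
$$
\tau_1=\bigl(-a_1\theta_1-(a_2+r)\theta_2\bigr)\wedge\theta,\qquad
\tau_2=\bigl((r-b_1)\theta_1-b_2\theta_2\bigr)\wedge\theta .
$$
Reading off ${\rm Tor}(T,e_i)$ gives the matrix of $A$ in the basis $\{e_1,e_2\}$, up to sign, namely
$$
\begin{pmatrix}-a_1&r-b_1\\ -(a_2+r)&-b_2\end{pmatrix}.
$$
The condition $AJ=-JA$, equivalently ${\rm Tor}(T,JX_1)=-J{\rm Tor}(T,X_1)$, says this matrix is symmetric and trace-free. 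The trace condition is $a_1+b_2=0$, which holds automatically by \eqref{eq:liebr-const}. Symmetry fixes $r=(b_1-a_2)/2$. Hence
$$
\omega=-a_3\theta_1-b_3\theta_2+\tfrac{b_1-a_2}{2}\,\theta,
$$
and $\tau_1,\tau_2$ are the forms displayed above with this value of $r$.

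Finally I would verify the remaining axioms. The conditions $\nabla g=0$, $\nabla\theta=0$, $\nabla T=0$ and $\nabla J=0$ hold by construction, since $\omega$ rotates $e_1,e_2$ and kills $T$. The torsion identities follow from the standard Cartan relation $\Theta^i=d\theta^i+\omega^i{}_j\wedge\theta^j$ evaluated on frame vectors. By the uniqueness of the Tanaka-Webster connection, the $\omega$ found is its connection form.

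The main obstacle is sign and convention bookkeeping. One must match the convention for $\omega_{ij}$ (that is, $\nabla e_i=\sum\omega_{ji}e_j$ versus $\omega_{ij}$) and the factor in $d\theta(X,Y)=X\theta(Y)-Y\theta(X)-\theta([X,Y])$ to the torsion formula. In particular, one must check that the symmetric, trace-free condition on the $\tau$-coefficients is exactly the anti-linearity $AJ=-JA$ and not its negative. I would settle this first by checking on the Heisenberg case, where all constants except the $-T$ in $[e_1,e_2]$ vanish, so that $\omega=0$ and $\tau_i=0$ are forced.
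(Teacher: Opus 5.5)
Your strategy of writing $\omega=p\,\theta_1+q\,\theta_2+r\,\theta$ and solving the Tanaka--Webster torsion conditions for $p,q,r$ is sound. Carried out correctly, it would also explain why the paper's $\omega$ is the Tanaka--Webster connection form; the paper only writes down \eqref{eq:omega} and \eqref{eq-torsionforms} and says they can be verified. Your horizontal step ($p=-a_3$, $q=-b_3$) is right. The vertical step, however, has a sign error, and your final $\omega$ is wrong. This matters because the displayed system holds trivially for any $\omega$ once $\tau_i$ is defined by it, so all the content lies in getting the correct $\omega$. Since $\omega\ni r\,\theta$, you get $-\omega\wedge\theta_2\ni -r\,\theta\wedge\theta_2=+r\,\theta_2\wedge\theta$ and $\omega\wedge\theta_1\ni r\,\theta\wedge\theta_1=-r\,\theta_1\wedge\theta$. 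You dropped the sign that comes from reordering the wedge. The correct forms are
\[
\tau_1=\bigl(-a_1\theta_1+(r-a_2)\theta_2\bigr)\wedge\theta,\qquad
\tau_2=\bigl(-(b_1+r)\theta_1-b_2\theta_2\bigr)\wedge\theta .
\]
Symmetry then forces $r-a_2=-(b_1+r)$, i.e.\ $r=(a_2-b_1)/2$. Hence $\omega=-a_3\theta_1-b_3\theta_2-\frac{b_1-a_2}{2}\theta$, which is exactly the paper's \eqref{eq:omega}, and using $a_1=-b_2$ the $\tau_i$ become \eqref{eq-torsionforms}. With your value $r=(b_1-a_2)/2$, the true off-diagonal coefficients are $(b_1-3a_2)/2$ and $(a_2-3b_1)/2$, which differ unless $a_2=b_1$. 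So your connection violates ${\rm Tor}(T,JX)=-J\,{\rm Tor}(T,X)$, and the closing verification via the Cartan relation that you promise would fail. Your $\omega$ would also contradict $\nabla_Te_1=\frac{a_2-b_1}{2}e_2$, which the paper uses later.

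The sanity check you propose cannot catch this error: in $\mathbb{H}$ one has $a_2=b_1=0$, so both signs give $r=0$. A convention-free check that does detect it is to compute the torsion directly. With $\nabla_Te_1=re_2$, $\nabla_Te_2=-re_1$ and $\nabla T=0$, one gets ${\rm Tor}(T,e_1)=a_1e_1+(b_1+r)e_2$ and ${\rm Tor}(T,e_2)=(a_2-r)e_1+b_2e_2$. Imposing ${\rm Tor}(T,e_2)=-J\,{\rm Tor}(T,e_1)=(b_1+r)e_1-a_1e_2$ yields $a_1+b_2=0$ and $r=(a_2-b_1)/2$.
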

\begin{proof}
Let
\begin{equation}\label{eq:omega}
\omega=-a_3\,\theta_1-b_3\,\theta_2-\frac{b_1-a_2}{2}\,\theta,
\end{equation}
and
\begin{equation}\label{eq-torsionforms}
\tau_1=\frac{a_1-b_2}{2}\,\theta\wedge \theta_1+\frac{a_2+b_1}{2}\,\theta\wedge\theta_2,\quad
\tau_2=\frac{a_2+b_1}{2}\,\theta\wedge\theta_1-\frac{a_1-b_2}{2}\,\theta\wedge\theta_2.
\end{equation}
Using Eqs. \eqref{eq:dth}, one verifies straightforwardly that Eqs. \eqref{firs-str} hold.
\end{proof}
\begin{defn}
Eqs. \eqref{firs-str} are the {\it first structural equations} for the Tanaka-Webster connection in $(G,g)$. The connection form is $\omega$ and the torsion forms are $\tau_i$, $i=1,2$.
\end{defn}
For an arbitrary vector field $U$, the covariant derivatives of $e_j$ with respect to $U$ are given by
$$
\nabla_Ue_j=\sum_{i=1}^2\omega_{ij}(U)e_i, \quad j=1,2,
$$
and $\nabla_UT=0$. Therefore,
\begin{eqnarray*}
&&
\nabla_Ue_1=-\omega_{12}(U)\,e_2=-\left(a_3\,\theta_1(U)+b_3\,\theta_2(U)+\frac{(b_1-a_2)\,\theta(U)}{2}\right)\,e_2,\\
&&
\nabla_Ue_2=\omega_{12}(U)\,e_1=\left(a_3\,\theta_1(U)+b_3\,\theta_2(U)+\frac{(b_1-a_2)\,\theta(U)}{2}\right)\,e_1.
\end{eqnarray*}
Explicitly,
\begin{eqnarray}
   \notag &&
    \nabla_{e_1}e_1=-a_3\,e_2,\quad \nabla_{e_1}e_2=a_3\,e_1,\quad \nabla_Te_1=((a_2-b_1)/2)\,e_2,\\
    \label{conn-forms-e}&&
    \nabla_{e_2}e_1=-b_3\,e_2,\quad \nabla_{e_2}e_2=b_3\,e_1,\quad \nabla_Te_2=((b_1-a_2)/2)\,e_1,\\
    \notag&&
    \nabla_{e_1}T=\nabla_{e_2}T=\nabla_TT=0.
\end{eqnarray}
We also deduce the following:
\begin{eqnarray*}
&&
{\rm Tor}(e_1,e_2)=T=d\theta(e_1,e_2)\,T,\\
&&
{\rm Tor}(e_1,T)=-C\,e_1-D\,e_2=\tau_1(e_1,T)\,e_1+\tau_2(e_1,T)\, e_2,\\
&&
 {\rm Tor}(e_2,T)=-D\,e_1+C\,e_2=
 \tau_2(e_1,T)\,e_1+\tau_2(e_2,T)\,e_2.
\end{eqnarray*}
Finally, the matrix of the antilinear operator $A$ with respect to the basis $\{e_1,e_2\}$ of $\calH$ is 
$$
A=\left(\begin{matrix}
C&D\\
D&-C
\end{matrix}\right),\quad C=\frac{a_1-b_2}{2}, \,\,D=\frac{a_2+b_1}{2}.
$$
\subsection{Curvature}\label{sec:curv}
There is only one curvature 2-form  $\Omega_{12}$ which is defined by the relation
$$
\Omega_{12}=d\omega_{12}-\omega_{12}\wedge\omega_{ 12}=d\omega_{12},
$$
where the latter holds due to the fact that $\omega_{12}$ is a 1-form. Straightforward calculations induce the following.
\begin{lem}
$$
\Omega_{12}=-d\omega=-\left(a_3^2+b_3^2+\frac{a_2-b_1}{2}\right)\,\theta_1\wedge\theta_2-(a_1a_3+b_1b_3)\,\theta_1\wedge \theta-(a_2a_3+b_2b_3)\,\theta_2\wedge \theta.
$$
\end{lem}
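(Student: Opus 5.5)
The plan is a direct computation: differentiate the connection form $\omega$ of \eqref{eq:omega} and substitute the structure equations \eqref{eq:dth}. Since $\omega_{12}=-\omega$ by \eqref{firs-str}, and $\omega_{12}\wedge\omega_{12}=0$ for any 1-form, we have $\Omega_{12}=d\omega_{12}=-d\omega$. This gives the first equality of the statement, so everything reduces to computing $d\omega$.

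The key observation is that $a_i,b_i$ are constants: the frame $\{e_1,e_2,T\}$ is left-invariant, and so is the coframe $\{\theta_1,\theta_2,\theta\}$. Hence no terms of the form $df\wedge\theta_i$ appear, and
$$
d\omega=-a_3\,d\theta_1-b_3\,d\theta_2-\frac{b_1-a_2}{2}\,d\theta .
$$
I then insert \eqref{eq:dth} term by term:
\begin{itemize}
\item $-a_3\,d\theta_1=a_3^2\,\theta_1\wedge\theta_2+a_1a_3\,\theta_1\wedge\theta+a_2a_3\,\theta_2\wedge\theta$;
\item $-b_3\,d\theta_2=b_3^2\,\theta_1\wedge\theta_2+b_1b_3\,\theta_1\wedge\theta+b_2b_3\,\theta_2\wedge\theta$;
\item $-\frac{b_1-a_2}{2}\,d\theta=\frac{a_2-b_1}{2}\,\theta_1\wedge\theta_2$, since $d\theta=\theta_1\wedge\theta_2$.
\end{itemize}

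Collecting coefficients in the basis $\theta_1\wedge\theta_2$, $\theta_1\wedge\theta$, $\theta_2\wedge\theta$ gives
$$
d\omega=\Bigl(a_3^2+b_3^2+\tfrac{a_2-b_1}{2}\Bigr)\theta_1\wedge\theta_2+(a_1a_3+b_1b_3)\,\theta_1\wedge\theta+(a_2a_3+b_2b_3)\,\theta_2\wedge\theta .
$$
Multiplying by $-1$ yields exactly the claimed expression for $\Omega_{12}$.

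There is no real obstacle; the only care needed is with signs. One must track the convention $\omega_{12}=-\omega$ and the orientation of the wedge products $\theta_i\wedge\theta$ as written in \eqref{eq:dth}. The relations \eqref{eq:liebr-const} are not needed here. They could be used to rewrite the mixed coefficients, for instance via $a_1b_3=a_3b_1$, but the lemma is stated without such simplification.
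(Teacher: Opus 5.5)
Your proposal is correct and follows the paper's own route: the paper obtains the lemma by the same ``straightforward calculation'', namely differentiating $\omega$ from \eqref{eq:omega} with constant coefficients, substituting \eqref{eq:dth}, and using $\Omega_{12}=d\omega_{12}=-d\omega$. Your term-by-term coefficients are right, and you are also right that \eqref{eq:liebr-const} plays no role.
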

\begin{prop}
    The Webster scalar curvature $R$ of $G$ is the sectional curvature of the characteristic plane spanned by $e_1,e_2$:
    $$
    R=\Omega_{12}(e_1,e_2)=-a_3^2-b_3^2-\frac{a_2-b_1}{2}.
    $$
\end{prop}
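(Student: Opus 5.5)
The plan is to compute the sectional curvature of the characteristic plane directly from the curvature operator of the Tanaka--Webster connection, and to check it against the curvature $2$-form of the preceding Lemma. First I will fix sign conventions. The curvature operator is
$$
{\rm R}(U,V)W=\nabla_U\nabla_VW-\nabla_V\nabla_UW-\nabla_{[U,V]}W ,
$$
and the sectional curvature of the plane $\langle e_1,e_2\rangle$ is $g({\rm R}(e_1,e_2)e_2,e_1)$. In terms of the connection forms we have $\nabla_U e_2=\omega_{12}(U)\,e_1$. A standard computation, using that the connection is $\mathfrak{so}(2)$-valued so $\omega_{12}\wedge\omega_{12}=0$, then gives
$$
g({\rm R}(U,V)e_2,e_1)=d\omega_{12}(U,V)=\Omega_{12}(U,V).
$$
Hence the sectional curvature equals $\Omega_{12}(e_1,e_2)$. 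In the rank-one situation here this quantity is, by definition, the Webster scalar curvature (the pseudohermitian Ricci/scalar curvature reduces to this single component). So the first equality of the statement follows.

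Second, I will evaluate. By the Lemma, $\Omega_{12}=-d\omega$. Since $\theta_1\wedge\theta(e_1,e_2)=\theta_2\wedge\theta(e_1,e_2)=0$ and $\theta_1\wedge\theta_2(e_1,e_2)=1$, we get
$$
\Omega_{12}(e_1,e_2)=-a_3^2-b_3^2-\tfrac{a_2-b_1}{2}.
$$

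As an independent check, I will compute directly from Eqs.~\eqref{conn-forms-e}. We have $\nabla_{e_2}e_2=b_3e_1$, so $\nabla_{e_1}\nabla_{e_2}e_2=b_3a_3... $; more precisely, $\nabla_{e_1}(b_3e_1)=-a_3b_3e_2$, which has no $e_1$-component. Next, $\nabla_{e_1}e_2=a_3e_1$ gives $\nabla_{e_2}(a_3e_1)=-a_3b_3e_2$, again with no $e_1$-component. Finally, $[e_1,e_2]=a_3e_1+b_3e_2-T$, so
$$
\nabla_{[e_1,e_2]}e_2=a_3\cdot a_3e_1+b_3\cdot b_3e_1-\tfrac{b_1-a_2}{2}e_1 .
$$
Taking the $e_1$-component of ${\rm R}(e_1,e_2)e_2$ gives $-a_3^2-b_3^2-\tfrac{a_2-b_1}{2}$, which agrees with the value from the Lemma.

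The main obstacle is bookkeeping of sign conventions. Two points need care: the orientation convention $\Omega_{12}$ versus $\Omega_{21}$, and the identification of the Webster scalar curvature with this sectional curvature rather than with some multiple of it. I will resolve both by the direct computation above, and by checking against the Heisenberg group, where all constants vanish and $R=0$.
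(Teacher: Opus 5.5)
Your proposal is correct and is essentially the paper's argument. The paper likewise just evaluates the curvature form $\Omega_{12}=-d\omega$ from the preceding Lemma on $(e_1,e_2)$ and treats the identification of $R$ with $\Omega_{12}(e_1,e_2)$ as definitional; your direct computation of $g({\rm R}(e_1,e_2)e_2,e_1)$ from Eqs.~\eqref{conn-forms-e} is a correct extra sign check. One caveat: the Heisenberg test, where everything vanishes, cannot detect a normalising factor, so that identification rests on convention. It does agree with Webster's $d\omega_1{}^1=R\,\theta^1\wedge\theta^{\bar 1}$ mod $\theta$ when $d\theta=i\,\theta^1\wedge\theta^{\bar 1}$.
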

For the affine additive group $\Aa$, we consider the contact form 
    \section{TW-geometry of hypersurfaces in $G$}\label{sec-TW-hyp}
    For a general discussion of hypersurfaces in CR geometry, see~\cite{ChernMoser1974} fand or profound studies on surfaces in the Heisenberg and broader Carnot groups, we refer the reader to~\cite{CHMY2005,DGN2001}. Here, we set up the notation in Section \ref{sec-prel-surf} and we define the adapted frame and the adapted coframe in Section \ref{sec-ort-fr}. The structural equations are given in Section \ref{sec-structural} and the second fundamental form and the mean curvature are in Section \ref{sec-sec-form}. Finally, the Gauss formula and the Gauss curvature are in Section \ref{sec-Gauss}.
    \subsection{Preliminaries and notation}\label{sec-prel-surf}
    Let $u:G\to \R$ be a smooth function and let $\Sigma$ be the hypersurface defined by the equation $u=0$.
\begin{defn}
  Let $\Sigma$ be as above. The {\it unit normal} $n_\Sigma$ of $\Sigma$ is given by:
    $$
    n_\Sigma=\frac{\nabla_Wu}{\|\nabla_Wu\|}.
    $$
    Here,
    $$
    \nabla_Wu=e_1u\,e_1+e_2u\,e_2+Tu\,T
    $$
    is the {\it Webster gradient} of $u$ and 
    $$
    \|\nabla_Wu\|=\sqrt{(e_1u)^2+(e_2u)^2+(Tu)^2},
    $$
    is the {\it Webster norm} of the Webster gradient of $u$. The {\it horizontal gradient} of $u$ is
    $$
    \nabla_Hu=e_1u\,e_1+e_2u\,e_2
    $$ 
    and the {\it horizontal norm} of $u$ is
    $$
    \|\nabla_Hu\|=\sqrt{(e_1u)^2+(e_2u)^2}.
    $$
A point $p\in\Sigma$ is called {\it characteristic} if $\nabla_Hu(p)=(0,0)$. The set of characteristic points of $\Sigma$ is the {\it characteristic locus} $\calC(\Sigma)$ of $\Sigma$. At non-characteristic points we have the {\it horizontal normal} 
$$
n_\Sigma^H=\frac{\nabla_Hu}{\|\nabla_Hu\|}.
$$
\end{defn}
\begin{rem}
It is important to emphasise the geometric distinction between the local and global geometry of $\Sigma$ relative to the horizontal distribution $\mathcal{H}$. At a non-characteristic point $p \in \Sigma \setminus \mathcal{C}(\Sigma)$, the tangent plane of the surface $T_p\Sigma$ is strictly transverse to the horizontal contact plane $\mathcal{H}_p$. At such points, the two planes intersect along a one-dimensional line, which uniquely determines the horizontal normal $n_\Sigma^H$ and allows the smooth construction of the orthonormal adapted moving frame. 

In contrast, at a characteristic point $p \in \mathcal{C}(\Sigma)$, the surface's tangent plane perfectly aligns with the ambient horizontal distribution such that $T_p\Sigma = \mathcal{H}_p$. At these singular points, transversality fails, the horizontal gradient vanishes, and the geometric framing collapses because the surface normal and the horizontal normal become linearly dependent. Globally, any closed surface embedded in a contact $3$-manifold must possess a non-empty characteristic locus due to topological constraints. Consequently, the structural equations and the resulting Tanaka-Webster curvatures derived herein are fundamentally local, capturing the behaviour of the surface exclusively on the open sets where it remains transverse to the contact structure.
\end{rem}
From this point on, we shall adopt the following notation:
\begin{eqnarray*}
&&
    l_W=\|\nabla_Wu\|,\quad l=\|\nabla_Hu\|,\\
    &&
    \overline{p}=e_1u/l,\quad \overline{q}=e_2u/l,\quad \overline{r}=Tu/l,\\
    &&
    \overline{p_W}=e_1u/l_W,\quad \overline{q_W}=e_2u/l_W,\quad \overline{r_W}=Tu/l_W.
\end{eqnarray*}
Therefore,
\begin{equation*}\label{eq-surf-W-norm}
n_\Sigma=\overline{p_W}\,e_1+\overline{q_W}\,e_2+\overline{r_W}\,T,
\end{equation*}
so that at non-characteristic points we may equivalently write
$$
n_\Sigma=(l/l_W)(\overline{p}\,e_1+\overline{q}\, e_2+\overline{r}\,T).
$$
Also, we have the {\it unit horizontal normal}
\begin{equation*}\label{eq-surf-H-norm}
n_\Sigma^H=\overline{p}\,e_1+\overline{q}\,e_2,
\end{equation*}
which is non-zero only at non-characteristic points.
\subsection{Orthonormal frame/coframe}\label{sec-ort-fr}
The tangent bundle ${\rm T}G$ splits at every non-characteristic point $p\in\Sigma$ as follows:
$$
T_p(G)=T_p(\Sigma)\oplus{\rm span}(n_\Sigma).
$$
This is shown by considering the differential
$$
du=e_1u\theta_1+e_2u\theta_2+Tu\,\theta.
$$
Then, the linearly independent unit vector fields
\begin{eqnarray}
    \label{E1} E_1&=&-\overline{q}\,e_1+\overline{p}\,e_2=Jn_\Sigma^H,\\
    \label{E2}  E_2&=&\frac{l}{l_W}(\overline{r}\,\overline{p}\,e_1+\overline{r}\,\overline{q}\,e_2-T),
\end{eqnarray}
are both in the kernel of $du$; therefore, they span ${\rm T}\Sigma$ at each non-characteristic point.

For the following proposition, whose proof is strictly computational, we need some notation first:
\begin{eqnarray*}
    \fH&=&e_1(\overline{p})+e_2(\overline{q})+a_3\overline{q}-b_3\overline{p},\\
    \calQ_1(\overline{p},\overline{q})&=&(a_2+b_1)(\overline{q}^2-\overline{p}^2)+2(a_1-b_2)\overline{pq},\\
    \calQ_2(\overline{p},\overline{q})&=&b_2\overline{p}^2+a_1\overline{q}^2-(a_2+b_1)\overline{pq}.
\end{eqnarray*}
\begin{prop}\label{prop-da}
Let $\alpha_1,\alpha_2, \alpha_\Sigma$ be the coframe of $E_1,E_2,n_\Sigma$. Then the following hold:
 \begin{align*}
   d\alpha_1&=A_1\,\alpha_\Sigma\wedge\alpha_1+A_2\,\alpha_\Sigma\wedge\alpha_2+A_3\,\alpha_1\wedge\alpha_2,\\
    d\alpha_2&=B_1\,\alpha_\Sigma\wedge\alpha_1+B_2\,\alpha_\Sigma\wedge\alpha_2+B_3\,\alpha_1\wedge\alpha_2,\\
    d\alpha_\Sigma&=C_1\,\alpha_\Sigma\wedge\alpha_1+C_2\,\alpha_\Sigma\wedge\alpha_2,
\end{align*}
where \begin{align*}
    A_1&=\frac{\fH+ \overline{r}\calQ_2(\overline{p},\overline{q})}{\sqrt{1+\overline{r}^2}},\\
    A_2&=-E_1(\overline{r})-\overline{r}E_1(\log l)-\calQ_1(\overline{p},\overline{q}),\\
    A_3&=\frac{-\overline{r}\fH+\calQ_2(\overline{p},\overline{q})}{\sqrt{1+\overline{r}^2}},\\
     B_1&=\frac{\overline{r}^2-1}{\overline{r}^2+1}E_1(\overline{r})+1,\\
    B_2&=\frac{\overline{r}}{\overline{r}^2+1}n_\Sigma(\overline{r})-E_2(\overline{r}),\\
    B_3&=\frac{\overline{r}}{1+\overline{r}^2}E_1(\overline{r})+\overline{r},\\
    C_1&=\frac{\overline{r}}{\overline{r}^2+1}E_1(\overline{r})+E_1(\log l),\\
    C_2&=\frac{\overline{r}}{\overline{r}^2+1}E_2(\overline{r})+E_2(\log l).
\end{align*}
For a detailed derivation of proposition \ref{prop-da} see \cite{BPPT}.
\end{prop}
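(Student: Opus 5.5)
The plan is to compute the exterior derivatives directly. First I write the dual coframe $\alpha_1,\alpha_2,\alpha_\Sigma$ in terms of $\theta_1,\theta_2,\theta$, then differentiate using Eqs.~\eqref{eq:dth}, and finally rewrite the result back in the new coframe.

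Write $\rho=\sqrt{1+\overline{r}^2}$, so that $l_W=l\rho$. The vectors $E_1,E_2,n_\Sigma$ are orthonormal for the Webster metric; this is checked directly from $\overline{p}^2+\overline{q}^2=1$. Hence the dual forms are obtained by transposing the coefficient matrix:
\begin{align*}
\alpha_1&=-\overline{q}\,\theta_1+\overline{p}\,\theta_2,\\
\alpha_2&=\rho^{-1}\bigl(\overline{r}\,\overline{p}\,\theta_1+\overline{r}\,\overline{q}\,\theta_2-\theta\bigr),\\
\alpha_\Sigma&=\rho^{-1}\bigl(\overline{p}\,\theta_1+\overline{q}\,\theta_2+\overline{r}\,\theta\bigr)=du/l_W.
\end{align*}
Inverting this orthogonal matrix gives $\theta_1,\theta_2,\theta$ as combinations of $\alpha_1,\alpha_2,\alpha_\Sigma$. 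I will use these expressions to convert every wedge product $\theta_i\wedge\theta_j$ into the basis $\alpha_\Sigma\wedge\alpha_1$, $\alpha_\Sigma\wedge\alpha_2$, $\alpha_1\wedge\alpha_2$.

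The easiest case is $d\alpha_\Sigma$, and I do it first. Since $\alpha_\Sigma=l_W^{-1}du$, we get $d\alpha_\Sigma=-d\log l_W\wedge\alpha_\Sigma=\alpha_\Sigma\wedge d\log l_W$. This has no $\alpha_1\wedge\alpha_2$ component, and its coefficients are $C_i=E_i(\log l_W)$. Using $\log l_W=\log l+\tfrac12\log(1+\overline{r}^2)$ yields the stated $C_1$ and $C_2$.

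For $d\alpha_1$ and $d\alpha_2$ I expand by the Leibniz rule. For example,
$$d\alpha_1=-d\overline{q}\wedge\theta_1+d\overline{p}\wedge\theta_2-\overline{q}\,d\theta_1+\overline{p}\,d\theta_2.$$
The derivative terms are handled by writing $df=E_1(f)\alpha_1+E_2(f)\alpha_2+n_\Sigma(f)\alpha_\Sigma$. The bracket terms come from Eqs.~\eqref{eq:dth}, and they produce the quadratic expressions $\calQ_1,\calQ_2$ together with the $a_3,b_3$ part of $\fH$. To simplify I will use three relations:
\begin{enumerate}
\item[(i)] $\overline{p}\,d\overline{p}+\overline{q}\,d\overline{q}=0$, which follows from $\overline{p}^2+\overline{q}^2=1$.
\item[(ii)] $a_1+b_2=0$ from \eqref{eq:liebr-const}.
\item[(iii)] The integrability identity $d(du)=0$. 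Written out in the frame, it relates $n_\Sigma$-derivatives and the quantity $E_1(\overline{r})$ to $\fH$ and the structure constants. This is what eliminates the normal derivatives of $\overline{p},\overline{q}$ and leaves only tangential derivatives.
\end{enumerate}
The same procedure applied to $\alpha_2$ additionally produces the $\rho$-derivatives that appear in $B_1,B_2,B_3$. The constant terms ($+1$ in $B_1$ and $\overline{r}$ in $B_3$) come from $d\theta=\theta_1\wedge\theta_2$ once it is rewritten in the $\alpha$-coframe.

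The main obstacle is this bookkeeping. I must correctly eliminate $e_i(\overline{p}),e_i(\overline{q})$ in favour of $\fH$ and $E_1,E_2$-derivatives, and correctly use $d(du)=0$ together with $d\alpha_\Sigma\wedge\alpha_\Sigma=0$ to verify that no spurious $n_\Sigma$-derivative terms survive in $A_1,A_3$. As a consistency check, I will confirm that the $\alpha_1\wedge\alpha_2$ coefficient of $d\alpha_\Sigma$ vanishes. This holds automatically because $\Sigma$ is a level set, and it forces one linear relation among the intermediate quantities that I will use when simplifying $A_2$.
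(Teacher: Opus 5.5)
Your method (write $\alpha_1,\alpha_2,\alpha_\Sigma$ in the coframe $\theta_1,\theta_2,\theta$, differentiate with \eqref{eq:dth}, convert back) is the same direct computation the paper relies on, since it defers the details to \cite{BPPT}. Most of it goes through.

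\begin{itemize}
\item Your dual coframe is correct, and $d\alpha_\Sigma=\alpha_\Sigma\wedge d\log l_W$ settles $C_1,C_2$ completely.
\item For $d\alpha_1$ one finds $d\alpha_1=\fH\,\theta_1\wedge\theta_2+P\,\theta_1\wedge\theta+Q\,\theta_2\wedge\theta$, with $P=T\overline{q}+a_1\overline{q}-b_1\overline{p}$ and $Q=-T\overline{p}+a_2\overline{q}-b_2\overline{p}$.
\item Then $\overline{q}P-\overline{p}Q=\calQ_2(\overline{p},\overline{q})$ by (i) alone, which gives $A_1$ and $A_3$.
\item For $A_2=-(\overline{p}P+\overline{q}Q)$ you need the relations $[e_i,T]u=a_i\,e_1u+b_i\,e_2u$ coming from (iii). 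They turn $\overline{p}\,T\overline{q}-\overline{q}\,T\overline{p}$ into $E_1(Tu)/l=E_1(\overline{r})+\overline{r}E_1(\log l)$, plus quadratic terms that assemble into $-\calQ_1(\overline{p},\overline{q})$.
\item Relation (ii) is never used.
\end{itemize}

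The step that fails is the $\alpha_2$ part. No correct bookkeeping can land on the printed $B_1$ and $B_2$, because those two formulas are wrong. The quickest route uses three identities:
\begin{itemize}
\item $\alpha_2=\overline{r}\,\alpha_\Sigma-\sqrt{1+\overline{r}^2}\,\theta$;
\item $\theta=(1+\overline{r}^2)^{-1/2}(\overline{r}\,\alpha_\Sigma-\alpha_2)$;
\item $\theta_1\wedge\theta_2=(1+\overline{r}^2)^{-1/2}(\alpha_\Sigma\wedge\alpha_1-\overline{r}\,\alpha_1\wedge\alpha_2)$.
\end{itemize}
Differentiating reproduces $B_3$, but yields
\[
B_1=\overline{r}\,C_1-\frac{E_1(\overline{r})}{1+\overline{r}^2}-1=\frac{\overline{r}^2-1}{\overline{r}^2+1}E_1(\overline{r})+\overline{r}E_1(\log l)-1,\qquad B_2=\overline{r}\,C_2-\frac{E_2(\overline{r})-\overline{r}\,n_\Sigma(\overline{r})}{1+\overline{r}^2}.
\]
In particular, the $d\theta$-term enters the $\alpha_\Sigma\wedge\alpha_1$-coefficient with a negative sign, so it cannot be the source of a $+1$.

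You can check this directly on the plane $u=t$ in $\mathbb{H}$. In cylindrical coordinates,
\[
E_1=\tfrac{1}{2}\partial_R,\qquad E_2=-\frac{1}{2R\sqrt{1+R^2}}\partial_\phi,\qquad n_\Sigma=-\frac{1}{2\sqrt{1+R^2}}\partial_\phi+\sqrt{1+R^2}\,\partial_t,
\]
and $[E_1,n_\Sigma]=-\frac{R^2}{1+R^2}E_2+\frac{R}{2(1+R^2)}n_\Sigma$. Hence $B_1=-\tfrac{1}{2}$ at $R=1$, where $\overline{r}=1$, while the printed formula gives $1$ there.

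Similarly, take $t=g(R)$, with $l=\sqrt{R^2+g'^2/4}$ and $l_W=\sqrt{1+l^2}$. A direct computation gives $B_2=-g'l'/(4l\,l_W^3)$, whereas the printed expression gives $-g'l'/(4l^3l_W^3)$.

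So your plan establishes $A_1,A_2,A_3,B_3,C_1,C_2$ as stated. For $B_1$ and $B_2$ it proves the corrected formulas above, not the stated ones. This does not affect the rest of the paper, which only uses $A_3$ and $B_3$.
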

\begin{cor}\label{cor-brackets}
Let $A_i,B_i$, $i=1,2,3$ and $C_i$, $i=1,2$ be as above. The following hold:
\begin{align*}
[E_1,E_2]&= -A_3 E_1-B_3 E_2,\\
[E_1,n_\Sigma]&=A_1E_1+B_1E_2+C_1 n_\Sigma,\\
[E_2,n_\Sigma]&=A_2E_1+B_2E_2+C_2 n_\Sigma.
\end{align*}
\end{cor}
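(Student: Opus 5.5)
The plan is to dualise Proposition \ref{prop-da} through the Cartan formula
$$
d\alpha(U,V)=U\alpha(V)-V\alpha(U)-\alpha([U,V]),
$$
applied to each coframe form $\alpha\in\{\alpha_1,\alpha_2,\alpha_\Sigma\}$ and each pair of frame vectors $U,V\in\{E_1,E_2,n_\Sigma\}$. Because $\alpha_i(E_j)=\delta_{ij}$, $\alpha_i(n_\Sigma)=0$, $\alpha_\Sigma(n_\Sigma)=1$ and $\alpha_\Sigma(E_j)=0$ are constant, the derivative terms drop out, and we are left with
$$
\alpha([U,V])=-d\alpha(U,V).
$$
Hence every component of a bracket in the frame $\{E_1,E_2,n_\Sigma\}$ can be read off from the coefficients in Proposition \ref{prop-da}. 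We expand
$$
[U,V]=\alpha_1([U,V])E_1+\alpha_2([U,V])E_2+\alpha_\Sigma([U,V])n_\Sigma,
$$
and obtain each component from the relevant $d\alpha$.

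The evaluations go as follows, using $(\beta\wedge\gamma)(U,V)=\beta(U)\gamma(V)-\beta(V)\gamma(U)$.

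For $(U,V)=(E_1,E_2)$:
\begin{itemize}
\item only the $\alpha_1\wedge\alpha_2$ terms survive, and they give $d\alpha_1(E_1,E_2)=A_3$ and $d\alpha_2(E_1,E_2)=B_3$;
\item $d\alpha_\Sigma(E_1,E_2)=0$.
\end{itemize}
So $[E_1,E_2]=-A_3E_1-B_3E_2$.

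For $(U,V)=(E_1,n_\Sigma)$, we use $(\alpha_\Sigma\wedge\alpha_1)(E_1,n_\Sigma)=-1$. Only the $\alpha_\Sigma\wedge\alpha_1$ terms contribute, giving
$$
d\alpha_1(E_1,n_\Sigma)=-A_1,\quad d\alpha_2(E_1,n_\Sigma)=-B_1,\quad d\alpha_\Sigma(E_1,n_\Sigma)=-C_1.
$$
Hence $[E_1,n_\Sigma]=A_1E_1+B_1E_2+C_1n_\Sigma$.

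For $(U,V)=(E_2,n_\Sigma)$, we use $(\alpha_\Sigma\wedge\alpha_2)(E_2,n_\Sigma)=-1$. This picks out the coefficients $A_2$, $B_2$, $C_2$ and yields the third formula in the same way.

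There is no real obstacle here; the only thing to watch is the sign and normalisation convention for the wedge product and exterior derivative. The Cartan formula above without a factor $1/2$ is the convention the paper already uses in Section \ref{sec:tw-group}, where $d\theta_1(e_1,e_2)=-\theta_1([e_1,e_2])$. With that convention the signs come out as stated.
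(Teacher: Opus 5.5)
Your proposal is correct and follows the paper's proof exactly: you expand each bracket in the frame $\{E_1,E_2,n_\Sigma\}$ and use $\alpha([U,V])=-d\alpha(U,V)$ (valid because the pairings are constant) to read the coefficients off Proposition~\ref{prop-da}. You also carry out the wedge evaluations for $[E_1,n_\Sigma]$ and $[E_2,n_\Sigma]$, which the paper only asserts ``in a similar manner,'' and your signs there are right.
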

\begin{proof}
We prove only the first relation; the remaining ones may be proved in a similar manner. In the first place,
$$
[E_1,E_2]=\alpha_1([E_1,E_2])\,E_1+\alpha_2([E_1,E_2])\,E_2+\alpha_\Sigma([E_1,E_2])\,n_\Sigma.
$$
We conclude the proof by observing that \begin{align*}
\alpha_1([E_1,E_2])&=-d\alpha_1(E_1,E_2)=-A_3,\\
\alpha_2([E_1,E_2])&=-d\alpha_2(E_1,E_2)=-B_3,\\
\alpha_\Sigma([E_1,E_2])&=-d\alpha_\Sigma(E_1,E_2)=0.
\end{align*}
\end{proof}

\subsection{Structural equations}\label{sec-structural}
Consider the following quantities: \begin{align*} \beta_{11}&=-\frac{\fH}{\sqrt{1+\overline{r}^2}},\\
\beta_{12}&=\frac{E_1(\overline{r})}{1+\overline{r}^2},\\
\beta_{21}&=\frac{\overline{r}^2+E_1(\overline{r})+(1/2)\mathcal{Q}_1(\overline{p},\overline{q})}{1+\overline{r}^2},\\
\beta_{22}&=\frac{E_2(\overline{r})}{1+\overline{r}^2}.
\end{align*}
The first structural equations for the moving frame are given by
$$
\left(\begin{matrix}
d\alpha_1\\
d\alpha_2\\
d\alpha_\Sigma
\end{matrix}\right)+
\left(\begin{matrix}
0&\eta_{12}&\eta_{13}\\
-\eta_{12}&0&\eta_{23}\\
-\eta_{13}&-\eta_{23}&0
\end{matrix}\right)\wedge
\left(\begin{matrix}
\alpha_1\\
\alpha_2\\
\alpha_\Sigma
\end{matrix}\right)=
\left(\begin{matrix}
\Theta_1\\
\Theta_2\\
\Theta_3
\end{matrix}\right),
$$
where the connection forms $\eta_{ij}$ $i,j \in \{1,2,3\}$ are given by
\begin{align*}
\eta_{12}&=\overline{r}\beta_{11}\alpha_1+\overline{r}\beta_{21}\alpha_2,\\
\eta_{13}&=\beta_{11}\alpha_1+\beta_{12}\alpha_2,\\
\eta_{23}&=\beta_{21}\alpha_1+\beta_{22}\alpha_2
\end{align*}
and $\Theta_i$, for $i \in \{1,2,3\}$ are given by \begin{align*}
    \Theta_1&=(A_3-\overline{r}\beta_{11})\alpha_1\wedge \alpha_2 \\
     \Theta_2&=(B_3-\overline{r}\beta_{21})\alpha_1\wedge \alpha_2\\
     \Theta_3 &=(\beta_{12}-\beta_{21})\alpha_1\wedge \alpha_2.
\end{align*}
In particular,
$$
\Omega_{12}=d\eta_{12}-\eta_{13}\wedge\eta_{23},
$$
and the {\it sectional curvature} $K(E_1,E_2)=\Omega_{12}$ of the distinguished plane spanned by $E_1$ and $E_2$ is \begin{eqnarray}\label{sectional} \notag K(E_1,E_2)&=&d\eta_{12}(E_1,E_2)- \eta_{13}\wedge\eta_{23}(E_1,E_2)\\ \notag&=&E_1(\eta_{12}(E_2))-E_2(\eta_{12}(E_1))-\eta_{12}([E_1,E_2])\\ \notag&&-\eta_{13}(E_1)\eta_{23}(E_2)+\eta_{13}(E_2)\eta_{23}(E_1)\\ &=&\beta_{12} \beta_{21}-\beta_{22}\beta_{11}-E_1(\overline{r} \beta_{21})+E_2(\overline{r}\beta_{11})-A_3\overline{r}\beta_{11}-B_3\overline{r} \beta_{21}.
\end{eqnarray}
\subsection{Second fundamental form-mean curvature}\label{sec-sec-form}
For a general discussion about the second  fundamental form of CR submanifolds, see~\cite{Dragomir1995}. Denote by $\nabla^\Sigma$ the restriction of the Tanaka-Webster connection $\nabla$ to ${\rm T}\Sigma$; then
$$
\nabla_XY=\nabla^\Sigma_XY+B(X,Y)n_\Sigma,
$$
for every $X,Y\in{\rm T}\Sigma$. Here, $B$ is the {\it Webster second fundamental form} of $\Sigma$. It is now evident that 
\begin{align*}
\nabla^\Sigma_{E_1}E_1&=\overline{r}\beta_{11}\,E_2,&\nabla^\Sigma_{E_2}E_1&=\overline{r}\beta_{21}\,E_2,\\
\nabla^\Sigma_{E_1}E_2&=-\overline{r}\beta_{11}\,E_1,&\nabla^\Sigma_{E_2}E_2&=\overline{r}\beta_{21}\,E_1
\end{align*}
and the matrix of the second fundamental form is
$$
B=\left(\begin{matrix}
\beta_{11}&\beta_{12}\\
\beta_{21}&\beta_{22}
\end{matrix}\right).
$$
The matrix $B$ is not symmetric. In fact, from the expressions for $\beta_{ij}$ we have:
\begin{equation*}
\beta_{21}-\beta_{12}=\frac{\overline{r}^2+(1/2)\calQ_1(\overline{p},\overline{q})}{1+\overline{r}^2}.
\end{equation*}
Geometrically, this lack of symmetry is a direct consequence of the intrinsic torsion of the Tanaka-Webster connection. In classical Riemannian geometry, a torsion-free Levi-Civita connection inherently yields a symmetric second fundamental form. In the present contact setting, however, the Lie bracket of the horizontal frame fields $[E_1, E_2]$ possesses a non-vanishing component along the Reeb vector field $T$. The skew-symmetric part of $B$, quantified by the difference $\beta_{21}-\beta_{12}$, captures the interplay between the surface normal and this ambient torsion. Specifically, it measures how the ambient contact distribution ``twists'' relative to the surface, reflecting the necessary projection of the Reeb vector field onto the normal bundle of $\Sigma$ at non-characteristic points.

\begin{defn}

    The {\it TW mean curvature} $H^{TW}_{\Sigma}$ of $\Sigma$ is defined as
    \begin{equation}\label{eq:TW-mC}
        H_\Sigma^{TW}=-\frac{{\rm tr}(B)}{2}=-\frac{\beta_{11}+\beta_{22}}{2}=\frac{\fH\sqrt{1+\overline{r}^2}-E_2(\overline{r})}{2(1+\overline{r}^2)}.
\end{equation}
    \begin{prop}\label{prop-HTW}
        The TW mean curvature $H_\Sigma^{TW}$ of $\Sigma$ depends only on the constants of $G$ and the horizontal derivatives of the defining function of  $\Sigma$.
\end{prop}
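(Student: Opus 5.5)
The plan is to write $H_\Sigma^{TW}$ explicitly in terms of $e_1u$, $e_2u$, $Tu$ and their derivatives, and then remove every occurrence of the Reeb direction using the bracket relation \eqref{eq-a,b}. Since $[e_1,e_2]=a_3e_1+b_3e_2-T$, we have
$$
T=a_3e_1+b_3e_2-[e_1,e_2],
$$
so for any smooth function $f$,
$$
Tf=a_3\,e_1f+b_3\,e_2f-e_1e_2f+e_2e_1f.
$$
In particular $Tu$, and anything obtained from it by further differentiation, is a combination of horizontal derivatives of $u$ with coefficients among the structure constants $a_i,b_i$. This is the precise sense in which I read ``depends only on the constants of $G$ and the horizontal derivatives'': iterated derivatives along $e_1,e_2$ are allowed, and $T$-derivatives are not.

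\textbf{Step 1: $\fH$.} Since $\overline{p}=e_1u/l$, $\overline{q}=e_2u/l$ and $l=\sqrt{(e_1u)^2+(e_2u)^2}$, the quantity
$$
\fH=e_1(\overline{p})+e_2(\overline{q})+a_3\overline{q}-b_3\overline{p}
$$
involves only $a_3,b_3$ and horizontal derivatives of $u$ up to order two. Nothing needs to be done here.

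\textbf{Step 2: the factors involving $\overline{r}$.} First use $1+\overline{r}^2=l_W^2/l^2$, so that $\sqrt{1+\overline{r}^2}=l_W/l$. Then \eqref{eq:TW-mC} becomes
$$
H_\Sigma^{TW}=\frac{l^2}{2l_W^2}\Big(\fH\,\frac{l_W}{l}-E_2(\overline{r})\Big).
$$
Next expand $E_2$ using \eqref{E2}:
$$
E_2(\overline{r})=\frac{l}{l_W}\big(\overline{r}\,n_\Sigma^H(\overline{r})-T(\overline{r})\big),
\qquad n_\Sigma^H=\overline{p}\,e_1+\overline{q}\,e_2 .
$$
Substituting gives
$$
H_\Sigma^{TW}=\frac{l}{2l_W}\Big(\fH-\frac{l^2}{l_W^2}\big(\overline{r}\,n_\Sigma^H(\overline{r})-T(\overline{r})\big)\Big).
$$

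\textbf{Step 3: eliminate $T$.} Wherever $Tu$ appears (in $\overline{r}=Tu/l$ and in $l_W$), replace it by
$$
Tu=a_3e_1u+b_3e_2u-e_1e_2u+e_2e_1u .
$$
For the term $T(\overline{r})$, apply the operator identity above with $f=\overline{r}$. This rewrites $T(\overline{r})$ as $a_3e_1(\overline{r})+b_3e_2(\overline{r})-e_1e_2(\overline{r})+e_2e_1(\overline{r})$. After this substitution every term of $H_\Sigma^{TW}$ is a rational expression, with square roots from $l$ and $l_W$, in $a_3,b_3$ and horizontal derivatives of $u$.

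\textbf{Main obstacle.} Step 3 formally raises the order of the horizontal derivatives to four, which is not very informative. So I would also try to see whether the $T(\overline{r})$ contribution simplifies, perhaps combining with $\overline{r}\,n_\Sigma^H(\overline{r})$, to something of lower order. If no such cancellation occurs, the statement still holds in the stated sense, since $T$ is generated by $e_1,e_2$ and the constants. In that case I would present the final formula in the form obtained in Step 2, together with the substitution rule for $T$ from Step 3.
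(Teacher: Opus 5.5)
Your proposal is correct and is essentially the paper's argument: the paper likewise notes that $\fH$ is already horizontal, and then removes the Reeb field from both $\overline{r}=Tu/l$ and the vector field $E_2$ using $T=a_3e_1+b_3e_2-[e_1,e_2]$, which is exactly your Step 3. Your signs in this substitution are correct, whereas the paper's displayed expression for $E_2$ contains sign slips, though these do not affect the conclusion.
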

\begin{proof}
    Since
    $$
    \fH=e_1(\overline{p})+e_2(\overline{q})+a_3\overline{q}-b_3\overline{p},
    $$
    we only have to show that $E_2(\overline{r})$  depends only on the horizontal geometry of $\Sigma$. This follows from the relations
    \begin{equation}\label{eq-r-e}
    \overline{r}=\frac{Tu}{l}=a_3\overline{p}+b_3\overline{q}-\frac{[e_1,e_2](u)}{l},
    \end{equation}
    and also
    \begin{eqnarray*}
     E_2&=&(l/l_W)\left(\overline{rp}e_1+\overline{rq}e_2-T\right) \\
     &=&\frac{r}{\sqrt{1+\overline{r}^2}}\,J E_1-\frac{1}{\sqrt{1+\overline{r}^2}}\left([e_1,e_2]-a_3\,e_1+b_3\,e_2\right).
\end{eqnarray*}
\end{proof}
\end{defn}
\subsection{Gauss formula-Gauss curvature}\label{sec-Gauss}
We have seen that the curvature form $\Omega_{12}$ satisfies
$$
\Omega_{12}=d\eta_{12}-\eta_{13}\wedge\eta_{23}.
$$
When restricted to $\Sigma$, $\Omega_{12}^\Sigma=d\eta_{12}$ and therefore
$$
\Omega_{12}=\Omega_{12}^\Sigma-\eta_{13}\wedge\eta_{23}.
$$
If $K_\Sigma^{TW}(E_1,E_2)$ is the sectional curvature of the distinguished plane spanned by $E_1$ and $E_2$ for the restricted metric in $\Sigma$ we therefore have the {\it Gauss formula}
\begin{equation}\label{Gauss-formula}
 K_\Sigma^{TW}(E_1,E_2)=K(E_1,E_2)+\det(B).
\end{equation}
Combining \eqref{Gauss-formula} with \eqref{sectional}
we state the following:
\begin{defn}\label{defn:Gauss-curv}
 The Gauss curvature $K_\Sigma^{TW}$ for the hypersurface $\Sigma$ endowed with the restriction to $\Sigma$ of the Tanaka-Webster metric in $G$ is given by
 \begin{equation}\label{Gauss-curvature}
     K_\Sigma^{TW}=-E_1(\overline{r} \beta_{21})+E_2(\overline{r}\beta_{11})-A_3\overline{r}\beta_{11}-B_3\overline{r} \beta_{21}.
\end{equation}
\end{defn}
Using the relations:
\begin{eqnarray*}
    \overline{r}\beta_{11}&=&-\frac{\overline{r}\fH}{\sqrt{1+\overline{r}^2}},\\
    \overline{r}\beta_{21}&=&\frac{\overline{r}}{1+\overline{r}^2}\left(E_1(\overline{r})+\overline{r}^2+(1/2)\calQ_1(\overline{p},\overline{q})\right),\\
    A_3&=&\frac{-\overline{r}\fH+\calQ_2(\overline{p},\overline{q})}{1+\overline{r}^2},\\
    B_3&=&\frac{\overline{r}}{1+\overline{r}^2}E_1(\overline{r})+\overline{r},
\end{eqnarray*}
we may express $K_\Sigma^{TW}$ in \eqref{Gauss-curvature} using only the constants of the group and derivatives of the defining function $u$ of $\Sigma$.
\begin{rem}\label{rem-cyl}
    A hypersurface $\Sigma$ is called {\it TW--cylindrical} if $Tu\equiv 0$. It is evident that all {\it TW}--cylindrical surfaces $\Sigma$ have $K_\Sigma^{TW}\equiv 0$.
\end{rem}

The following proposition is the analogue of Theorema Egregium and its proof follows from the proof of Proposition \eqref{prop-HTW} and the above relations for $\overline{r}\beta_{11}$, $\overline{r}\beta_{21}$, $A_3$ and $B_3$:
\begin{prop}\label{prop-TW-egregium}
  The Gauss curvature $K_\Sigma^{TW}$ depends only on the constants of $G$ and the horizontal derivatives of the defining function $u$ of $\Sigma$.
\end{prop}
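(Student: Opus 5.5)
The plan is to express every ingredient of formula \eqref{Gauss-curvature} in terms of the group constants $a_i,b_i$ and of the functions $e_1u$, $e_2u$ together with their iterated derivatives along the horizontal fields $e_1,e_2$. The only non-horizontal data entering $K_\Sigma^{TW}$ are $Tu$ (through $\overline{r}$) and the field $E_2$ (which contains $T$). Both can be eliminated using $T=-[e_1,e_2]+a_3e_1+b_3e_2$, which follows from \eqref{eq-a,b}. Concretely, I would first record, as in the proof of Proposition \ref{prop-HTW}, that by \eqref{eq-r-e}
$$
\overline{r}=a_3\overline{p}+b_3\overline{q}-\frac{e_1(e_2u)-e_2(e_1u)}{l}.
$$
Here $l=\sqrt{(e_1u)^2+(e_2u)^2}$. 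Hence $\overline{r}$ is a function of horizontal derivatives of $u$ up to second order, and so are $\overline{p},\overline{q}$, $\fH$, $\calQ_1(\overline{p},\overline{q})$ and $\calQ_2(\overline{p},\overline{q})$.

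Next I would treat the vector fields. The field $E_1=-\overline{q}e_1+\overline{p}e_2$ is horizontal, so $E_1$ applied to any horizontally expressed quantity remains horizontally expressed. For $E_2$, the proof of Proposition \ref{prop-HTW} gives
$$
E_2=\frac{1}{\sqrt{1+\overline{r}^2}}\bigl(\overline{r}\,\overline{p}\,e_1+\overline{r}\,\overline{q}\,e_2+[e_1,e_2]-a_3e_1-b_3e_2\bigr).
$$
Since $[e_1,e_2]f=e_1e_2f-e_2e_1f$, applying $E_2$ to a function built from horizontal derivatives of $u$ again yields such a function, with coefficients depending on $\overline{p},\overline{q},\overline{r}$ and the constants. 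This observation is the inductive engine of the argument: the class $\mathcal{F}$ of smooth functions that are expressions in the constants and in iterated $e_1,e_2$-derivatives of $u$ is an algebra, it is closed under $E_1$ and $E_2$, and it is closed under division by the nonvanishing quantities $l$ and $1+\overline{r}^2$ at non-characteristic points.

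With this in hand, I would substitute the displayed relations for $\overline{r}\beta_{11}$, $\overline{r}\beta_{21}$, $A_3$ and $B_3$ into \eqref{Gauss-curvature}. Each of these four quantities lies in $\mathcal{F}$, because each is built from $\overline{r}$, $\fH$, $\calQ_1$, $\calQ_2$ and $E_1(\overline{r})$. The terms $-E_1(\overline{r}\beta_{21})$ and $E_2(\overline{r}\beta_{11})$ are then obtained by applying $E_1,E_2$ to elements of $\mathcal{F}$, so they also lie in $\mathcal{F}$. This is the closure property established above. The remaining products $A_3\overline{r}\beta_{11}$ and $B_3\overline{r}\beta_{21}$ are in $\mathcal{F}$ by the algebra property. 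Therefore $K_\Sigma^{TW}\in\mathcal{F}$, which is exactly the statement of the proposition.

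The main obstacle is the term $E_2(\overline{r}\beta_{11})$, since $E_2$ contains the Reeb direction $T$. One must check that rewriting $T$ through the bracket $[e_1,e_2]$ is legitimate as an operator identity acting on functions, and not merely at the level of $Tu$. This holds because \eqref{eq-a,b} is an identity of vector fields, so $Tf=a_3e_1f+b_3e_2f-e_1e_2f+e_2e_1f$ for every smooth $f$. The cost is that the expression involves horizontal derivatives of $u$ of order up to four. I would state this explicitly, so that ``horizontal derivatives'' is understood to mean derivatives of arbitrary order along $e_1,e_2$.
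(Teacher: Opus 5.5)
Your proposal is correct and follows essentially the paper's argument. The paper also eliminates $T$ through $T=a_3e_1+b_3e_2-[e_1,e_2]$, both in $\overline{r}$ and in $E_2$ as in the proof of Proposition~\ref{prop-HTW}, and then substitutes the displayed expressions for $\overline{r}\beta_{11}$, $\overline{r}\beta_{21}$, $A_3$, $B_3$ into \eqref{Gauss-curvature}. Your class $\mathcal{F}$ just makes that one-line argument explicit; it should also admit square roots of the positive quantities $l^2$ and $1+\overline{r}^2$. Your formula for $E_2$ is the correctly signed form of the one displayed in the paper.
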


\section{Computations in the Heisenberg Group}\label{sec-comp-H}
In this section, we apply the moving frame machinery established in Section \ref{sec-TW-hyp} to the 3-dimensional Heisenberg group $\mathbb{H}$. We provide  closed-form computations of the Tanaka-Webster mean and Gauss curvatures for several prototypical classes of hypersurfaces, i.e., tilted planes, graphs and surfaces of revolution; this demonstrates how the geometry behaves away from the characteristic locus.

The Heisenberg group $\mathbb{H}$ is $\R^2\times\R$ with coordinates $(x,y,t)$ and group operation
$$
p\cdot p'=(x+x',\,y+y',\,t+t'+2(xy'-yx')),
$$
for every $p=(x,y,t)$ and $p'=(x',y',t')\in\mathbb{H}$. The contact form is
    $$
    \theta=dt+2x\,dy-2y\,dx,
    $$
    the Haar volume form is
    $$
    \omega\wedge d\omega=4\,dx\wedge dy\wedge dt,
    $$
    and the normalised left-invariant frame vector fields are
    $$
    e_1=(1/2)\partial_x+y\partial_t,\quad e_2=(1/2)\partial_y-x\partial_t,\quad T=\partial_t.
    $$
    The only non-zero Lie bracket is $[e_1,e_2]=-T$; therefore $a_i=b_i=0$, $i=1,2,3,$ and $\mathbb{H}$ 
has identically zero Webster scalar curvature. 

Let $\Sigma \subset \mathbb{H}$ be a smooth hypersurface defined as the non-degenerate level set $u(x,y,t) = 0$. At non-characteristic points,  the horizontal gradient norm and the normalised frame components are:
$$
l = \sqrt{(e_1u)^2 + (e_2u)^2}, \quad \overline{p} = \frac{e_1u}{l}, \quad \overline{q} = \frac{e_2u}{l}, \quad \overline{r} = \frac{Tu}{l}.
$$
The tangent space of the surface is spanned by the adapted frame fields $E_1 = -\overline{q}e_1 + \overline{p}e_2$ and $E_2 = \frac{1}{\sqrt{1+\overline{r}^2}}(\overline{r}\overline{p}e_1 + \overline{r}\overline{q}e_2 - T)$.
The horizontal mean curvature term reduces to $\fH = e_1(\overline{p}) + e_2(\overline{q})$.
Since the background torsion and structural functions $\mathcal{Q}_i$ vanish, the structural functions of the orthogonal coframe compress to:
$$
\beta_{11} = -\frac{\fH}{\sqrt{1+\overline{r}^2}}, \quad \beta_{21} = \frac{\overline{r}^2 + E_1(\overline{r})}{1+\overline{r}^2},
$$
$$
A_3 = -\frac{\overline{r}\fH}{1+\overline{r}^2}, \quad B_3 = \frac{\overline{r}E_1(\overline{r})}{1+\overline{r}^2} + \overline{r}.
$$
The Tanaka-Webster mean curvature $H_\Sigma^{TW}$ and Gauss curvature $K_\Sigma^{TW}$ are given explicitly by:
\[
H_\Sigma^{TW} = \frac{\fH\sqrt{1+\overline{r}^2} - E_2(\overline{r})}{2(1+\overline{r}^2)},
\]
$$
K_\Sigma^{TW} = -E_1(\overline{r} \beta_{21}) + E_2(\overline{r}\beta_{11}) - A_3\overline{r}\beta_{11} - B_3\overline{r} \beta_{21}.
$$
\begin{ex}
   {\it TW--Cylindrical Surfaces.}
For the TW--cylindrical surfaces we have:
$$
H_\Sigma^{TW} = \frac{\fH}{2}, \quad K_\Sigma^{TW} = 0.
$$
Here TW--cylindrical surfaces are defined by equations of the form $u(x,y)=0$.
\end{ex}
\begin{ex}{\it Tilted Planes.}
Consider the  tilted plane defined by $u(x,y,t) = ax + by + ct + d = 0$, where $c \neq 0$.
The frame derivatives are:
$$
e_1u = \frac{1}{2}a + cy, \quad e_2u = \frac{1}{2}b - cx, \quad Tu = c.
$$
The horizontal gradient is $l = \sqrt{(\frac{1}{2}a + cy)^2 + (\frac{1}{2}b - cx)^2}$ and is non-zero away from the single characteristic point $C=(b/(2c),\,-a/(2c),-d/c)$.
Evaluating the derivatives of the normal fields yields:
$$
 \fH = e_1(\overline{p}) + e_2(\overline{q}) = 0.
$$
Since $\fH = 0$, it follows that $\beta_{11} = 0$ and $A_3 = 0$.
Next, we find the directional derivatives of $\overline{r} = c/l$:
$$
e_1(\overline{r}) = \frac{1}{2}\overline{r}^2\overline{q}, \quad e_2(\overline{r}) = -\frac{1}{2}\overline{r}^2\overline{p} \implies E_1(\overline{r}) = -\frac{1}{2}\overline{r}^2, \quad E_2(\overline{r}) = 0.
$$
Substituting these into the curvature formulae yields:
$$
H_\Sigma^{TW} = \frac{0 \cdot \sqrt{1+\overline{r}^2} - 0}{2(1+\overline{r}^2)} = 0,
$$
$$
\beta_{21} = \frac{\overline{r}^2 - \frac{1}{2}\overline{r}^2}{1+\overline{r}^2} = \frac{\overline{r}^2}{2(1+\overline{r}^2)}, \quad B_3 = \overline{r}\left(1 - \frac{\overline{r}^2}{2(1+\overline{r}^2)}\right) = \overline{r}\frac{2+\overline{r}^2}{2(1+\overline{r}^2)}.
$$
The Gauss curvature relation simplifies via the one-dimensional chain rule with respect to $\overline{r}$:
\begin{align*}
K_\Sigma^{TW} &= -E_1\left(\frac{\overline{r}^3}{2(1+\overline{r}^2)}\right) - B_3\left(\frac{\overline{r}^3}{2(1+\overline{r}^2)}\right) \\
&= -\left[\frac{\overline{r}^2(3+\overline{r}^2)}{2(1+\overline{r}^2)^2}\right]\left(-\frac{1}{2}\overline{r}^2\right) - \left[\frac{\overline{r}(2+\overline{r}^2)}{2(1+\overline{r}^2)}\right]\left[\frac{\overline{r}^3}{2(1+\overline{r}^2)}\right] \\
&= \frac{\overline{r}^4(3+\overline{r}^2) - \overline{r}^4(2+\overline{r}^2)}{4(1+\overline{r}^2)^2} = \frac{\overline{r}^4}{4(1+\overline{r}^2)^2}.
\end{align*}
Hence, the tilted plane is a TW-minimal surface ($H_\Sigma^{TW} = 0$) with a strictly positive non-vanishing Gauss curvature 
$$
K_\Sigma^{TW} = \frac{\overline{r}^4}{4(1+\overline{r}^2)^2}, \quad \overline{r}=\frac{c}{\sqrt{(\frac{1}{2}a + cy)^2 + (\frac{1}{2}b - cx)^2}},
$$
away from the characteristic point $C$.
\end{ex} 
\subsection{Graphs $t = f(x,y)$}
Let $\Sigma \subset \mathbb{H}$ be a surface defined as the graph of a smooth function $t = f(x,y)$.
The defining function is given by:
\begin{equation*}
u(x,y,t) = t - f(x,y) = 0.
\end{equation*}
Evaluating the frame vector fields on the defining function $u$, we obtain the structural components:
\begin{align*}
e_1 u &= y - \frac{1}{2}f_x, \\
e_2 u &= -x - \frac{1}{2}f_y, \\
T u &= 1.
\end{align*}
The horizontal gradient norm $l$ and the Reeb component $\overline{r}$ are given by:
\begin{equation*}
l = \sqrt{\left(y - \frac{1}{2}f_x\right)^2 + \left(-x - \frac{1}{2}f_y\right)^2}, \quad \overline{r} = \frac{1}{l}.
\end{equation*}
To evaluate the curvatures, we compute the second-order directional derivatives of $u$ with respect to the horizontal frame fields:
\begin{align*}
e_1(e_1 u) &= \left(\frac{1}{2}\partial_x + y\partial_t\right)\left(y - \frac{1}{2}f_x\right) = -\frac{1}{4}f_{xx}, \\
e_2(e_1 u) &= \left(\frac{1}{2}\partial_y - x\partial_t\right)\left(y - \frac{1}{2}f_x\right) = \frac{1}{2} - \frac{1}{4}f_{xy}, \\
e_1(e_2 u) &= \left(\frac{1}{2}\partial_x + y\partial_t\right)\left(-x - \frac{1}{2}f_y\right) = -\frac{1}{2} - \frac{1}{4}f_{yx}, \\
e_2(e_2 u) &= \left(\frac{1}{2}\partial_y - x\partial_t\right)\left(-x - \frac{1}{2}f_y\right) = -\frac{1}{4}f_{yy}.
\end{align*}
Differentiating the horizontal gradient norm $l$ gives:
\begin{align*}
e_1(l) &= \frac{1}{l}\left[ (e_1 u)e_1(e_1 u) + (e_2 u)e_1(e_2 u) \right] = \frac{1}{l}\left[ -\frac{1}{4}(e_1 u)f_{xx} - \left(\frac{1}{2} + \frac{1}{4}f_{xy}\right)(e_2 u) \right], \\
e_2(l) &= \frac{1}{l}\left[ (e_1 u)e_2(e_1 u) + (e_2 u)e_2(e_2 u) \right] = \frac{1}{l}\left[ \left(\frac{1}{2} - \frac{1}{4}f_{xy}\right)(e_1 u) - \frac{1}{4}(e_2 u)f_{yy} \right].
\end{align*}
Thus,
\begin{equation*}
\overline{p}e_1(l) + \overline{q}e_2(l) = -\frac{1}{4l^2}\left[ (e_1 u)^2 f_{xx} + 2(e_1 u)(e_2 u) f_{xy} + (e_2 u)^2 f_{yy} \right].
\end{equation*}
We next proceed to calculate the horizontal mean curvature Field $\mathfrak{H}$:
\begin{equation*}
\mathfrak{H} = e_1(\overline{p}) + e_2(\overline{q}) = \frac{e_1(e_1 u) + e_2(e_2 u)}{l} - \frac{\overline{p}e_1(l) + \overline{q}e_2(l)}{l}.
\end{equation*}
Substituting the expressions derived above, we obtain the following.
\begin{align*}
\mathfrak{H} &= -\frac{1}{4l}(f_{xx} + f_{yy}) + \frac{1}{4l^3}\left[ (e_1 u)^2 f_{xx} + 2(e_1 u)(e_2 u) f_{xy} + (e_2 u)^2 f_{yy} \right] \nonumber \\
&= -\frac{1}{4l^3}\left[ \left(l^2 - (e_1 u)^2\right)f_{xx} - 2(e_1 u)(e_2 u) f_{xy} + \left(l^2 - (e_2 u)^2\right)f_{yy} \right].
\end{align*}
Using the relation $l^2 = (e_1 u)^2 + (e_2 u)^2$, this elegantly reduces to
\begin{equation}\label{hmc-gr-h}
\mathfrak{H} = -\frac{1}{4l^3}\left[ (e_2 u)^2 f_{xx} - 2(e_1 u)(e_2 u) f_{xy} + (e_1 u)^2 f_{yy} \right].
\end{equation}
Now, differentiating $\overline{r} = 1/l$ along $E_2$ yields:
\begin{equation*}
E_2(\overline{r}) = -\frac{1}{l^2}E_2(l) = -\frac{1}{l^2\sqrt{l^2+1}}\left( \overline{p}e_1(l) + \overline{q}e_2(l) \right).
\end{equation*}
Therefore, combining our expressions for $\mathfrak{H}$ and $\overline{p}e_1(l) + \overline{q}e_2(l)$ , we end up with  $\Sigma$:
\begin{equation}\label{mc-g-h}
H_\Sigma^{TW} = \frac{-\left(e_1(u)^2
   \left(l^2+1\right)+e_2(u)^2\right)
   f_{yy}-\left(e_1(u)^2+e_2(u)^2
   \left(l^2+1\right)\right) f_{xx}+2 e_1(u)
   e_2(u) l^2 f_{xy}}{8 l^2
   \left(l^2+1\right)^{3/2}},
\end{equation}
where $e_1 u = y - \frac{1}{2}f_x$ and $e_2 u = -x - \frac{1}{2}f_y$.
The invariant structural relations collapse to:
\begin{equation*}
K_\Sigma^{TW} = -E_1\left(\overline{r}\beta_{21}\right) + E_2\left(\overline{r}\beta_{11}\right) - \frac{\mathfrak{H}^2}{l^2+1} - B_3\overline{r}\beta_{21},
\end{equation*}
where the underlying components are expressed strictly as functions of $l$ and $\mathfrak{H}$ via
\begin{align*}
\overline{r}\beta_{11} &= -\frac{\mathfrak{H}}{\sqrt{l^2+1}}, \\
\overline{r}\beta_{21} &= \frac{1 - E_1(l)}{l(l^2+1)}, \\
B_3 &= \frac{l^2 + 1 - E_1(l)}{l(l^2+1)}.
\end{align*}
The derivative $E_1(l)$ is:
\begin{align*}
E_1(l) &= -\overline{q}e_1(l) + \overline{p}e_2(l) \nonumber \\
&= \frac{1}{2} + \frac{1}{4l^2}\left[ (e_1 u)(e_2 u)(f_{xx} - f_{yy}) - \left((e_1 u)^2 - (e_2 u)^2\right)f_{xy} \right].
\end{align*}
Thus, we may express the Tanaka--Webster Gauss curvature $K_\Sigma^{TW}$ of the graph implicitly by:
\begin{equation}
K_\Sigma^{TW} = -E_1\left( \frac{1 - E_1(l)}{l(l^2+1)} \right) - E_2\left( \frac{\mathfrak{H}}{\sqrt{l^2+1}} \right) - \frac{\mathfrak{H}^2}{l^2+1} - \frac{\left(l^2+1 - E_1(l)\right)\left(1 - E_1(l)\right)}{l^2(l^2+1)^2}.
\end{equation}

\begin{ex}{\it The Saddle Surface.}
Consider the saddle surface defined as the graph $t = xy$. The defining function is $u(x,y,t) = t - xy = 0$, so $f(x,y) = xy$.
We evaluate the first and second partial derivatives:
$$f_x = y, \quad f_y = x, \quad f_{xx} = 0, \quad f_{yy} = 0, \quad f_{xy} = 1.$$
Evaluating the left-invariant horizontal frame fields yields:
\begin{align*}
e_1 u &= y - \frac{1}{2}y = \frac{1}{2}y, \\
e_2 u &= -x - \frac{1}{2}x = -\frac{3}{2}x.
\end{align*}
The squared horizontal gradient norm is:
$$l^2 = (e_1 u)^2 + (e_2 u)^2 = \frac{1}{4}y^2 + \frac{9}{4}x^2.$$
Since $f_{xx}$ and $f_{yy}$ vanish, the horizontal mean curvature field simplifies significantly:
$$\fH = -\frac{1}{4l^3}\left[ - 2\left(\frac{1}{2}y\right)\left(-\frac{3}{2}x\right) \right] = -\frac{3xy}{8l^3}.$$
Applying the graph formula for the Tanaka--Webster mean curvature:
$$H_\Sigma^{TW} = -\frac{- 2(e_1 u)(e_2 u)f_{xy}}{8(l^2+1)^{3/2}} = -\frac{3xy}{16(l^2+1)^{3/2}}.$$
This demonstrates that the saddle surface has a TW mean curvature that vanishes precisely along the coordinate axes $x=0$ and $y=0$, but is non-zero elsewhere. The Gauss curvature $K_\Sigma^{TW}$ can be computed by substituting $l$ and $\fH$ into the structural formulas provided previously.
\end{ex}

\subsection{Surfaces of Revolution in the Heisenberg Group $\mH$}
 Let $\Sigma \subset \mH$ be a surface of revolution generated by a smooth profile curve $t = g(\rho)$ in cylindrical coordinates, where $\rho = \sqrt{x^2+y^2}$;
the defining function is given by $u(x,y,t) = t - g(\rho) = 0$ \cite{BPPT}.
Evaluating the left-invariant frame $e_1 = \frac{1}{2}\partial_x + y\partial_t$, $e_2 = \frac{1}{2}\partial_y - x\partial_t$, and $T = \partial_t$ on $u$ yields:
\begin{align*}
e_1 u &= -\frac{1}{2}g'(\rho)\cos\phi + \rho\sin\phi, \\
e_2 u &= -\frac{1}{2}g'(\rho)\sin\phi - \rho\cos\phi, \\
Tu &= 1.
\end{align*} 
The horizontal gradient norm $l$ and the Reeb component $\overline{r}$ are given by:
\begin{equation}
l = \sqrt{\frac{1}{4}g'(\rho)^2 + \rho^2}, \quad \overline{r} = \frac{1}{l}.
\end{equation}
The horizontal mean curvature $\fH$ reduces to:
\begin{equation}\label{eq:H_field_H}
\fH = e_1(\overline{p}) + e_2(\overline{q}) =\frac{\rho \left(f'(\rho)-\rho
   f''(\rho)\right)}{4l^3} .
\end{equation}
The frame directional derivatives acting on functions of $\rho$ are:
\begin{equation*}
E_1(F(\rho)) = \frac{\rho}{2l}F'(\rho), \quad E_2(F(\rho)) = -\frac{g'(\rho)}{4l\sqrt{l^2+1}}F'(\rho).
\end{equation*}
Applying this to the definition of the TW mean curvature $H_\Sigma^{TW}$, we find:
\begin{equation}\label{eq:H-sor-H}
H_\Sigma^{TW} = \frac{\fH\sqrt{1+\overline{r}^2}-E_2(\overline{r})}{2(1+\overline{r}^2)} =\frac{-\left(l^2+1\right) \rho^2
   g''(\rho)+l^3 g'(\rho)
   \overline{r}'(\rho)+\left(l^2+1\right) \rho
   g'(\rho)}{8 l^2
   \left(l^2+1\right)^{3/2}} .
\end{equation}

The TW Gauss curvature is given by
\begin{equation}\label{eq:K-sor-H}
K_\Sigma^{TW} = -\frac{\rho}{2l} \left[ \frac{2l - \rho l'}{2l^2(l^2+1)} \right]' + \frac{g'(\rho)}{4l\sqrt{l^2+1}} \left[ \frac{\fH}{\sqrt{l^2+1}} \right]' - \frac{\fH^2}{l^2+1} - \frac{(2l^3 + 2l + \rho l^2 r')(2l - \rho l')}{4l^4(l^2+1)^2},
\end{equation}
where $l' = \frac{g'(\rho)g''(\rho) + 4\rho}{4l}$.
\subsubsection*{Constant Curvature Equations}
The constant TW Gauss curvature relation $K_\Sigma^{TW} = c_1$ is a fourth-order nonlinear ordinary differential equation for $g(\rho)$;
on the other hand, we may
list some solutions of the constant TW mean curvature equation $K_\Sigma^{TW} = c_2$ below:
\begin{enumerate}
    \item Horizontal Planes ($c_1 = 0$): setting $g'(\rho) = 0 \implies g(\rho) = t_0$ solves the ODE identically.
Thus, horizontal planes are TW-minimal surfaces.
    \item Vertical Cylinders: for a profile parameterised by $\rho = R$, the horizontal gradient becomes $l = 1/2$, yielding the constant curvature solution $H_\Sigma^{TW} = \frac{1}{4R}$.
\end{enumerate}

\begin{ex}{\it The CC-Sphere (Pansu Sphere).}
The Carnot-Carath\'{e}odory (CC) sphere, or Pansu sphere, centred at $0$ and of radius $R$, is given by the surface patch
        $$
        \sigma(k, \phi) =
\left(\frac{1-e^{ikR}}{k}e^{i\phi},\, \frac{2}{k^2}(\sin(k R)- kR)\right),
$$
with $(k,\phi)\in(-2\pi/R,2\pi/R)\times(0,2\pi)$. We write
$$
r=A(k)=\frac{2\sin(kR/2)}{k},\quad t=B(k)=\frac{2}{k^2}(\sin(kR)-kR).
$$
Then $t=g(r)=(B\circ A^{-1})(r)$ and we calculate
\begin{eqnarray*}
 g'(r)&=&\frac{4}{k}\cos (kR/2),
\end{eqnarray*}
and $(g')^2+4r^2=16/k^2$. 
Moreover,
$$
 g''(r)=-2\frac{\cos(kR/2)+2(kR)\sin(kR/2)}{(kR)\cos(kR/2)-2\sin(kR/2)}.
$$
The TW Gauss and mean curvature may now be calculated using the above formulae for the derivatives of $g$; the results though are quite long formulae that we shall not include here. We only stress that none of the $H^{TW}$ and $K^{TW}$ are constant.
\end{ex}

\begin{ex}
    The {\it bubble set $\mathcal{B}_R=\mathcal{B}(O,R)$ centred at $O$ and of radius $R>0$}  is the surface of revolution defined by the formula
        $$
        t=f(r)=r\sqrt{4R^2-r^2}-4R^2\arcsin(r/(2R))+2\pi R^2.
        $$
        In contrast with the horizontal mean curvature (see \cite{BPPT}), the bubble set has non-constant TW mean curvature.
\end{ex}

\section{Computations in the Affine Additive Group}\label{sec-comp-aa}

This section expands our geometric investigation to hypersurfaces embedded within the non-unimodular affine additive Lie group $\Aa$. By looking at specific coordinate level sets and graphs, we explore the  interaction between the group's underlying algebraic structure, its negative background Webster scalar curvature, and the resulting surface invariants.

The affine-additive group $\Aa$, is defined as $\mathcal{AA}:=\mathbb{R} \times \mathbf{H}^1_{\mathbb{C}}$, where $\mathbf{H}^1_\C:=\{(\lambda,t):\lambda>0,t\in\R\}$ is the right half-plane model for the hyperbolic plane. The group operation is given by 
 $$
 p \cdot p'=(a+a',\lambda \lambda' , \lambda t'+t),
 $$ 
 for every $p=(a,\lambda,t)$ and $p'=(a',\lambda',t')$.
The standard adapted left-invariant orthonormal frame is:
\[
e_1 = \lambda\partial_\lambda, \quad e_2 = \partial_a + \lambda\partial_t, \quad T = \partial_a.
\]
The Lie bracket configurations are given by $[e_1, e_2] = e_2 - T$ and $[e_1, T] = [e_2, T] = 0$.
The structural constants for $\mathcal{AA}$ are:
$$
a_1 = b_1 = a_2 = b_2 = a_3 = 0, \quad b_3 = 1.
$$
Consequently, the Webster scalar curvature is $R=-1$.

Let $\Sigma$ be a hypersurface given as usual by  $u=0$;
then, the auxiliary quadratic invariants vanish identically: $\mathcal{Q}_1(\overline{p}, \overline{q}) = \mathcal{Q}_2(\overline{p}, \overline{q})  = 0$.
The horizontal mean curvature reduces to:
$$
\fH = e_1(\overline{p}) + e_2(\overline{q}) - \overline{p}.
$$
Using the specific structural constants of $\mathcal{AA}$, the orthonormal frame coefficients become:
$$
\beta_{11} = -\frac{\fH}{\sqrt{1+\overline{r}^2}}, \quad \beta_{21} = \frac{\overline{r}^2 + E_1(\overline{r})}{1+\overline{r}^2},
$$
$$
A_3 = -\frac{\overline{r}\fH}{\sqrt{1+\overline{r}^2}} = \overline{r}\beta_{11}, \quad B_3 = \frac{\overline{r}E_1(\overline{r})}{1+\overline{r}^2} + \overline{r}.
$$
The Tanaka-Webster mean curvature $H_\Sigma^{TW}$ and Gauss curvature $K_\Sigma^{TW}$ along non-characteristic points of $\Sigma$ are defined by:
$$
H_\Sigma^{TW} = \frac{\fH\sqrt{1+\overline{r}^2} - E_2(\overline{r})}{2(1+\overline{r}^2)},
$$
\[
K_\Sigma^{TW} = -E_1(\overline{r} \beta_{21}) + E_2(\overline{r}\beta_{11}) - A_3\overline{r}\beta_{11} - B_3\overline{r} \beta_{21}
\]

\begin{ex}{\it TW-Cylindrical Surfaces.}
 The TW-cylindrical condition $Tu \equiv 0$ yields the following:  
 $$
\beta_{11} = -\fH, \quad \beta_{21} = 0, \quad A_3 = 0, \quad B_3 = 0.
$$
Therefore,
$$
H_\Sigma^{TW} = \frac{\fH}{2}, \quad K_\Sigma^{TW} = 0.
$$
\end{ex}
\begin{ex}{\it The Plane $a = 0$.}
 Consider the coordinate plane defined by the level set function $u(\lambda, a, t) = a = 0$;
this realises the natural embedding of the hyperbolic plane $\bH^1_\C$ into $\Aa$.
Applying the frame derivatives to $u$ yields:
$$
e_1u = \lambda\partial_\lambda(a) = 0, \quad e_2u = (\partial_a + \lambda\partial_t)(a) = 1, \quad Tu = \partial_a(a) = 1.
$$ 
The horizontal gradient is constant: $l = \sqrt{(e_1u)^2 + (e_2u)^2} = 1$.
The normalised component fields are thus 
$$
\overline{p} = 0, \quad \overline{q} = 1, \quad \overline{r} = 1.
$$
Since $\overline{p}$, $\overline{q}$, and $\overline{r}$ are constant functions across the entire plane, all derivatives vanish identically:
$$
e_1(\overline{p}) = e_2(\overline{q}) = 0 \implies \fH = 0 + 0 - 0 = 0,
$$
$$
E_1(\overline{r}) = E_2(\overline{r}) = 0.
$$
Substituting these vanishing derivatives into the structural coefficients gives:
$$
\beta_{11} = 0, \quad \beta_{21}  = \frac{1}{2},
$$
$$
A_3 = 0, \quad B_3  = 1.
$$
The Tanaka-Webster curvatures for the plane $a=0$ are therefore
$$
H_\Sigma^{TW} = 0,\quad K_\Sigma^{TW}=-\frac{1}{2}.
$$
Hence, the plane $a = 0$ is a TW-minimal surface ($H_\Sigma^{TW} = 0$) embedded in $\mathcal{AA}$ possessing a strictly negative constant Tanaka-Webster Gauss curvature of $K_\Sigma^{TW} = -\frac{1}{2}$.
\end{ex}
\subsection{Graphs $a=f(\lambda, t)$.}
Let $\Sigma$ be a hypersurface defined  as a graph $a = f(\lambda, t)$, so that the defining function is $u(\lambda, a, t) = a - f(\lambda, t) = 0$.
We compute:
\begin{equation*}
e_1(u) = -\lambda f_\lambda, \quad e_2(u) = 1 - \lambda f_t, \quad T(u) = 1.
\end{equation*}
The horizontal gradient norm $l$ and the associated normalised parameters are written as follows:
\begin{equation*}
l = \sqrt{\lambda^2 f_\lambda^2 + (1 - \lambda f_t)^2}, \quad \overline{p} = -\frac{\lambda f_\lambda}{l}, \quad \overline{q} = \frac{1 - \lambda f_t}{l}, \quad \overline{r} = \frac{1}{l}.
\end{equation*}
Since $\overline{p}$ and $\overline{q}$ do not depend on the variable $a$, the horizontal structural function $\fH$ becomes:
\begin{equation*}
\fH = \lambda \frac{\partial \overline{p}}{\partial \lambda} + \lambda \frac{\partial \overline{q}}{\partial t} - \overline{p}.
\end{equation*}
The quadratic forms simplify identically to $\calQ_1(\overline{p},\overline{q}) = \calQ_2(\overline{p},\overline{q})  = 0$.
The adapted intrinsic vector fields spanning $T\Sigma$ act on functions independent of $a$ via:
\begin{align*}
E_1 &= -\lambda \overline{q}\partial_\lambda + \lambda \overline{p}\partial_t, \\
E_2 &= \frac{\lambda \overline{r}}{\sqrt{1+\overline{r}^2}} \left( \overline{p}\partial_\lambda + \overline{q}\partial_t \right).
\end{align*}
In this way, we have:
\begin{align*}
E_1(\overline{r}) &= -\lambda \overline{q} \frac{\partial \overline{r}}{\partial \lambda} + \lambda \overline{p} \frac{\partial \overline{r}}{\partial t}, \\
E_2(\overline{r}) &= \frac{\lambda \overline{r}}{\sqrt{1+\overline{r}^2}} \left( \overline{p}\frac{\partial \overline{r}}{\partial \lambda} + \overline{q}\frac{\partial \overline{r}}{\partial t} \right).
\end{align*}
Applying the structural parameters to the mean curvature definition, we find the explicit formula for $H_\Sigma^{TW}$:
\begin{equation*}
H_\Sigma^{TW} = \frac{\fH\sqrt{1+\overline{r}^2} - E_2(\overline{r})}{2(1+\overline{r}^2)}.
\end{equation*}
Substituting the expressions for $\fH$ and $E_2(\overline{r})$, this yields:
\begin{equation}\label{hmc-aa-g}
H_\Sigma^{TW} = \frac{\left(\lambda \overline{p}_\lambda + \lambda \overline{q}_t - \overline{p}\right)(1+\overline{r}^2) - \lambda \overline{r}\left(\overline{p}\overline{r}_\lambda + \overline{q}\overline{r}_t\right)}{2(1+\overline{r}^2)^{3/2}}.
\end{equation}

Here, the restricted components of the second fundamental form are:
\begin{equation*}
\overline{r}\beta_{11} = -\frac{\overline{r}\fH}{\sqrt{1+\overline{r}^2}}, \quad \overline{r}\beta_{21} = \frac{\overline{r}\left(\overline{r}^2 + E_1(\overline{r})\right)}{1+\overline{r}^2}.
\end{equation*}
Hence, the final expanded representation for the Tanaka--Webster Gauss curvature of the graph is:
\begin{equation}\label{gc-aa-g}
K_\Sigma^{TW} = -E_1\left( \frac{\overline{r}^3 + \overline{r}E_1(\overline{r})}{1+\overline{r}^2} \right) - E_2\left( \frac{\overline{r}\fH}{\sqrt{1+\overline{r}^2}} \right) - \frac{\overline{r}^3 + \overline{r}E_1(\overline{r})}{1+\overline{r}^2}\left(  \frac{\overline{r}}{1+\overline{r}^2}E_1(\overline{r})+\overline{r}
   \right)- \left( \frac{-\overline{r}\fH}{\sqrt{1+\overline{r}^2}}\right)^2.
\end{equation}
\subsection{Surfaces of revolution}
Let $\Sigma \subset \Aa$ be a surface of revolution defined implicitly by the level set function $u(\lambda, a, t) = a - f(\rho) = 0$, where $\rho = t/\lambda$.
The frame derivatives of the defining function $u$ are expressed in terms of the ordinary derivative $f'(\rho)$ as follows:
\begin{equation*}
e_1(u) = \rho f'(\rho), \quad e_2(u) = 1 - f'(\rho), \quad T(u) = 1.
\end{equation*}
The horizontal gradient norm $l$ and the associated normalised parameters depend exclusively on the profile variable $\rho$:
\begin{equation*}
l = \sqrt{\rho^2 (f'(\rho))^2 + (1 - f'(\rho))^2}, \quad \overline{p} = \frac{\rho f'(\rho)}{l}, \quad \overline{q} = \frac{1 - f'(\rho)}{l}, \quad \overline{r} = \frac{1}{l}.
\end{equation*}
The horizontal mean curvature $\fH$ reduces to the one-dimensional profiling relation:
\begin{equation*}
\fH = -\rho \overline{p}'(\rho) + \overline{q}'(\rho) - \overline{p}(\rho).
\end{equation*}
The actions of the frame fields $E_1, E_2 \in T\Sigma$ on any smooth parameter function $h(\rho)$ are given by:
\begin{align}
E_1(h) &= \rho\overline{r} h'(\rho), \\
E_2(h) &= \frac{\overline{r}}{\sqrt{1+\overline{r}^2}} \left( \overline{q}(\rho) - \rho \overline{p}(\rho) \right) h'(\rho).
\end{align}
In particular, the tracking directional derivatives for $\overline{r}(\rho)$ are computed directly by setting $h = \overline{r}$.
Applying the reduced tracking actions to the general formulation, the Tanaka--Webster mean curvature $H_\Sigma^{TW}$ of the surface of revolution satisfies:
\begin{equation*}
H_\Sigma^{TW} = \frac{\fH\sqrt{1+\overline{r}^2} - E_2(\overline{r})}{2(1+\overline{r}^2)},
\end{equation*}
where substituting $E_2(\overline{r})$ yields the explicit ordinary differential form:
\begin{equation}\label{hmc-sor-aa}
H_\Sigma^{TW} = \frac{\fH (1+\overline{r}^2) - \overline{r}(\overline{q} - \rho\overline{p})\overline{r}'(\rho)}{2(1+\overline{r}^2)^{3/2}}.
\end{equation}
Using the constraints $A_3 = -\frac{\overline{r}\fH}{\sqrt{1+\overline{r}^2}}$ and $B_3 = \frac{\overline{r}}{1+\overline{r}^2}E_1(\overline{r}) + \overline{r}$, the Gauss curvature $K_\Sigma^{TW}$ expands to:
\begin{equation*}
K_\Sigma^{TW} = -E_1(\overline{r} \beta_{21}) + E_2(\overline{r}\beta_{11}) - A_3\overline{r}\beta_{11} - B_3\overline{r} \beta_{21},
\end{equation*}
where the second fundamental form components are determined by:
\begin{equation*}
\overline{r}\beta_{11} = -\frac{\overline{r}\fH}{\sqrt{1+\overline{r}^2}}, \quad \overline{r}\beta_{21} = \frac{\overline{r}\left(E_1(\overline{r}) + \overline{r}^2\right)}{1+\overline{r}^2}.
\end{equation*}
Combining these terms yields the final expression for the Tanaka--Webster Gauss curvature of the rotational profile:
\begin{equation}\label{gc-sor-aa}
K_\Sigma^{TW} = -E_1\left( \frac{\overline{r}^3 + \overline{r}E_1(\overline{r})}{1+\overline{r}^2} \right) - E_2\left( \frac{\overline{r}\fH}{\sqrt{1+\overline{r}^2}} \right) - \frac{\overline{r}^2 \fH^2}{1+\overline{r}^2} - \frac{\overline{r}^2\left(1+\overline{r}^2+E_1(\overline{r})\right)\left(\overline{r}^2+E_1(\overline{r})\right)}{(1+\overline{r}^2)^2}.
\end{equation}

\section{Sasakian Structures and Riemannian Approximations}\label{sec-sasaki}
In this final section, we specialise our ambient space to Lie groups carrying a Sasakian structure, see \cite{BG-SG} and analyse how the Tanaka-Webster connection simplifies under these conditions. Furthermore, we investigate the asymptotic behaviour of our curvature profiles under a sequence of Riemannian approximation metrics, reinforcing connections to classical sub-Riemannian limits established in foundational works like \cite{CHMY2005} and \cite{DGN2001}.

For Riemannian metrics in contact manifolds, and in particular, Sasakian structures, see~\cite{Blair2010}. Recall that if $M$ is a $2p+1$-dimensional smooth manifold, then we call $(M,\theta,T, J , g)$ a contact metric structure on $M$ if \begin{enumerate} \item $\theta$ is a smooth 1-form such that $\theta\wedge(d\theta)^p\neq 0$;
\item $T$ is the Reeb field of $\theta$: $\theta(T)=1$, $d\theta(T,\cdot)=0$;
\item in the $2p$-dimensional subbundle $\mathcal{H}=\ker\theta$ there is a complex operator $J:\mathcal{H}\to\mathcal{H}$: $J^2=-{\rm id}_{{\mathcal H}}$ which we can expand to an endomorphism on the whole $TM$ by setting $J T=0$.
\item a canonical Riemannian metric $g$ is defined in $M$ by the relations $$ \theta (U)=g(U,T),\quad d\theta (U,V)=g(J U,V),\quad J^2 (U)=-U+\theta (U)T, $$ for all vector fields $U,V\in \mathfrak{X}(M)$.
\end{enumerate}
In our setting, we immediately have the following: \begin{prop} $(G,\theta,T,J,g)$ where $g$ is the Webster metric is a contact metric manifold.
\end{prop}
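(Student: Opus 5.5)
The plan is to check the four defining conditions of a contact metric structure one at a time in the orthonormal frame $\{e_1,e_2,T\}$ of the earlier Proposition. Everything is $C^\infty(G)$-linear, so each identity only needs to be verified on frame vectors.

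\textbf{Conditions (1)--(3).} Condition (1) with $p=1$ is the standing hypothesis $\theta\wedge d\theta>0$. Condition (2) is the definition of the Reeb field $T$. It can also be read off from Eqs. \eqref{eq:dth}: there $d\theta=\theta_1\wedge\theta_2$, so $d\theta(T,\cdot)=0$ and $\theta(T)=1$. For condition (3), $J$ is defined on $\mathcal{H}$ by $Je_1=e_2$, $Je_2=-e_1$, which gives $J^2=-{\rm id}_{\mathcal{H}}$. It is extended by $JT=0$.

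\textbf{Condition (4), first relation.} The relation $\theta(U)=g(U,T)$ follows from Eq. \eqref{eq:W-metric} and $JT=0$:
$$g(U,T)=d\theta(U,JT)+\theta(U)\theta(T)=\theta(U).$$

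\textbf{Condition (4), third relation.} For $J^2U=-U+\theta(U)T$, write $U=U_H+\theta(U)T$ with $U_H\in\mathcal{H}$. Then $J^2U=J^2U_H=-U_H$, which equals $-U+\theta(U)T$.

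\textbf{Condition (4), second relation.} This is the step I expect to be the main obstacle, because it depends on sign and ordering conventions. The paper's metric is $g(X_1,X_2)=d\theta(X_1,JX_2)+\theta(X_1)\theta(X_2)$, whereas the contact metric condition asks for $d\theta(U,V)=g(JU,V)$. I will check it on the frame:
\begin{itemize}
\item $g(Je_1,e_2)=g(e_2,e_2)=1=d\theta(e_1,e_2)$;
\item $g(Je_2,e_1)=-1=d\theta(e_2,e_1)$;
\item $g(Je_1,e_1)=0=d\theta(e_1,e_1)$, and likewise $g(Je_2,e_2)=0=d\theta(e_2,e_2)$;
\item any pair involving $T$ gives $0$ on both sides, since $JT=0$, $d\theta(T,\cdot)=0$, and $g(Je_i,T)=0$ because $Je_i$ is horizontal.
\end{itemize}
Bilinearity then gives the identity for all $U,V$. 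As a consistency check, $g(JU,V)=d\theta(JU,JV)$ reduces to $d\theta(U,V)$ by $J$-invariance of $d\theta$ on $\mathcal{H}$, which holds because $d\theta(Je_1,Je_2)=d\theta(e_2,-e_1)=1$.

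If some convention turns out to introduce a sign discrepancy, the fallback is to recheck the identity on the frame pair by pair, as above, and absorb the sign through the orientation choice $c=-1$ made earlier.
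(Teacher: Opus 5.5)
Your proposal is correct and is the routine frame-by-frame verification the paper leaves out; the paper gives no proof and only says the proposition holds ``immediately'' in this setting. Your pairwise check, together with the $J$-invariance of $d\theta$ on $\mathcal{H}$, already gives $d\theta(U,V)=g(JU,V)$ with the correct sign, so the closing fallback is unnecessary. It could not have worked anyway: the earlier rescaling only normalises the magnitude of $c$, and its sign is forced by $\theta\wedge d\theta>0$.
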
 Let $\{e_1,e_2,T\}$ and $\{\theta_1,\theta_2,\theta\}$ be as in Section \ref{sec:contact}.
The first structural equations for the Levi-Civita connection $\nabla^\theta$ are given by the matrix equation of 1-forms $$ \left(\begin{matrix} d\theta_1\\ d\theta_2\\ d\theta \end{matrix}\right)= \left(\begin{matrix} 0&-\Phi_{12}&-\Phi_{13} \\ \Phi_{12}&0&-\Phi_{23} \\ \Phi_{13}&\Phi_{23}&0 \end{matrix}\right)\wedge\left(\begin{matrix} \theta_1\\ \theta_2\\ \theta \end{matrix}\right).
$$ Straightforward calculations imply the following: \begin{align*} \Phi_{12}&=a_3\,\theta_1+b_3\, \theta_2+\frac{b_1-a_2-1}{2}\, \theta, \\ \Phi_{13}&=a_1\, \theta_1 +\frac{a_2+b_1-1}{2}\,\theta_2, \\ \Phi_{23}&= \frac{b_1+a_2+1}{2}\,\theta_1 +b_2\,\theta_2.
\end{align*} \begin{lem}\label{lem-k-contact} $(G, \theta, T, J, g)$ where $g$ is the Webster metric is $K$-contact if and only if $a_1=b_2=0$ and $a_2+b_1=0$.
\end{lem}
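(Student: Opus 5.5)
The plan is to use the standard characterisation that a contact metric manifold is $K$-contact exactly when its Reeb field $T$ is a Killing field for $g$, i.e. $\mathcal{L}_Tg=0$. Since $g$ and $T$ are left-invariant and $\{e_1,e_2,T\}$ is a left-invariant orthonormal frame, it suffices to test $\mathcal{L}_Tg$ on pairs of frame vectors. All the functions $g(e_i,e_j)$, $g(e_i,T)$, $g(T,T)$ are constant. The Lie derivative therefore reduces to
$$
(\mathcal{L}_Tg)(U,V)=-g([T,U],V)-g(U,[T,V]),\qquad U,V\in\{e_1,e_2,T\},
$$
with no derivative terms.

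Next I read off the brackets from \eqref{eq-a,b}: $[T,e_1]=-a_1e_1-b_1e_2$, $[T,e_2]=-a_2e_1-b_2e_2$, and $[T,T]=0$. Substituting these gives the following table:
\begin{align*}
(\mathcal{L}_Tg)(e_1,e_1)&=2a_1,\\
(\mathcal{L}_Tg)(e_2,e_2)&=2b_2,\\
(\mathcal{L}_Tg)(e_1,e_2)&=a_2+b_1,\\
(\mathcal{L}_Tg)(e_i,T)&=(\mathcal{L}_Tg)(T,T)=0,\quad i=1,2.
\end{align*}
The last line holds because $[T,e_i]$ is horizontal and hence orthogonal to $T$. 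It follows that $\mathcal{L}_Tg=0$ if and only if $a_1=b_2=0$ and $a_2+b_1=0$, which is the assertion of Lemma \ref{lem-k-contact}. I note that the relation $a_1+b_2=0$ from \eqref{eq:liebr-const} already links the first two conditions, so only one of them is new; I would remark on this but it does not change the statement.

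As a consistency check, and to connect with the Levi-Civita forms just displayed, I would also verify the result through $\nabla^\theta T$. From the structural equations one gets $\nabla^\theta_U T=\Phi_{13}(U)e_1+\Phi_{23}(U)e_2$, up to the sign convention. Killing's equation then says that the matrix of $U\mapsto\nabla^\theta_UT$ in the basis $\{e_1,e_2\}$ is skew-symmetric. That matrix is $\begin{pmatrix}a_1&(a_2+b_1-1)/2\\(a_2+b_1+1)/2&b_2\end{pmatrix}$. Its symmetric part vanishes precisely when $a_1=b_2=0$ and $a_2+b_1=0$, while its skew part is $\mp\tfrac12$, coming from $d\theta=\theta_1\wedge\theta_2$.

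The only real obstacle is bookkeeping of signs. The convention relating $\Phi_{ij}$ to $\nabla^\theta$ and the orientation of the brackets can flip signs, so I would rely on the direct bracket computation and use the $\Phi$-route only as a cross-check.
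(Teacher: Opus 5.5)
Your proof is correct, but your main computation takes a slightly different route from the paper's. The paper never forms $\mathcal{L}_Tg$. It writes Killing's equation as $g(\nabla^\theta_VT,U)+g(\nabla^\theta_UT,V)=0$ and reads $\nabla^\theta_UT=\Phi_{13}(U)e_1+\Phi_{23}(U)e_2$ off the Levi-Civita forms it has just computed. Expanding in components then gives $2a_1u_1v_1+(a_2+b_1)(u_1v_2+u_2v_1)+2b_2u_2v_2$, which is exactly your ``consistency check''. You instead evaluate $(\mathcal{L}_Tg)(U,V)=-g([T,U],V)-g(U,[T,V])$ on the orthonormal frame, using only the brackets \eqref{eq-a,b}. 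The identity $(\mathcal{L}_Tg)(U,V)=g(\nabla^\theta_UT,V)+g(U,\nabla^\theta_VT)$ explains why both give the same quadratic form.

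Your route is more economical. It does not depend on the forms $\Phi_{ij}$, which the paper only asserts via ``straightforward calculations'', nor on the sign conventions that worried you. The paper's route shows all of $\nabla^\theta T$: its skew part $\pm\tfrac12$, coming from $d\theta=\theta_1\wedge\theta_2$, is always present, so only the symmetric part can obstruct the Killing property.

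Your observation that $a_1+b_2=0$ makes one condition redundant is worth keeping. With it, the conditions read $C=D=0$, and on $\calH$ one has $\mathcal{L}_Tg=2g(A\,\cdot,\cdot)$. So the lemma says precisely that $G$ is $K$-contact if and only if the Tanaka--Webster torsion operator $A$ vanishes.
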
 \begin{proof} $T$ is Killing if \begin{align*} g(\nabla^\theta_VT,U)+g(\nabla^\theta_UT,V)&=0. \end{align*} Let $U=u_1e_1+u_2e_2+u_3T$ and $V=v_1 e_1+v_2 e_2+v_3 T$, then \begin{align*} \Phi_{13}(V)&=a_1v_1+\frac{a_2+b_1-1}{2}v_2,\\ \Phi_{23}(V)&=\frac{a_2+b_1+1}{2}v_1+b_2v_2.
\end{align*} We have \begin{align*} g_W(\nabla^\theta_VT,U)&=\left( a_1v_1+\frac{a_2+b_1-1}{2}v_2 \right)u_1+\left( \frac{a_2+b_1+1}{2}v_1+b_2v_2 \right)u_2,\\ g_W(\nabla^\theta_UT,V)&=\left( a_1u_1+\frac{a_2+b_1-1}{2}u_2 \right)v_1+\left( \frac{a_2+b_1+1}{2}u_1+b_2u_2 \right)v_2.
\end{align*} Thus \begin{align*} g_W(\nabla^\theta_VT,U)+ g_W(\nabla^\theta_UT,V)&=2a_1u_1v_1+\left( \frac{a_2+b_1-1}{2}+\frac{a_2+b_1+1}{2} \right)v_2u_1 \\ &+\left( \frac{a_2+b_1+1}{2}+\frac{a_2+b_1-1}{2} \right)u_2v_1+2b_2u_2v_2.
\end{align*} Hence, $T$ is Killing if and only if $a_1=b_2=0$ and $a_2+b_1=0$.
\end{proof}
We also considered Riemannian approximations $(G,g_\epsilon)$, $\epsilon>0$ for the sub-Riemannian metric of $G$.
In this context, we calculated the Gauss curvature $K_\Sigma^\epsilon$ of a hypersurface $\Sigma$ isometrically embedded in $G$.
For $\epsilon=1$, $g_1=g$ is the Sasakian metric in $G$ as above.
Explicitly, let $K_\Sigma^1$ be the Levi-Civita Gauss curvature of the surface in $G$.
As seen in \cite{BPPT} , $K^1_\Sigma$ is given by
$$
K^1_\Sigma:-E_1(B_3)+E_2(A_3)-A_3^2-B^2_3.
$$
We have that \begin{align*}
    K_\Sigma^{TW} &= K_\Sigma^1 +E_1\left( \frac{\overline{r}}{1+\overline{r}^2} \right)+B_3 \frac{\overline{r}}{1+\overline{r}^2}.
\end{align*}
For $\epsilon \to 0$, $K^\epsilon_\Sigma$ is given by \begin{align*}
    K^0_\Sigma &=-E_1\left( \frac{Tu}{l} \right) -\left( \frac{Tu}{l} \right)^2,
\end{align*}
hence
\begin{align*}
    K_\Sigma^{TW} &=K^0_\Sigma -E_1\left( \frac{\overline{r}E_1(\overline{r})}{1+\overline{r}^2} \right)+E_2(A_3)-A_3^2-\left( \frac{\overline{r}E_1(\overline{r})}{1+\overline{r}^2} \right)^2 -2\left( \frac{\overline{r}^2E_1(\overline{r})}{1+\overline{r}^2} \right)+E_1\left( \frac{\overline{r}}{1+\overline{r}^2} \right)+B_3\frac{\overline{r}}{1+\overline{r}^2}.
\end{align*}
\begin{rem}
    All TW-cylindrical surfaces have $K_\Sigma^{TW}=K^1_\Sigma =K^0_\Sigma=0.$
\end{rem}

\bibliographystyle{plain}  
\bibliography{Library.bib}    

@misc{BPPT,
 author = {E. Bubani and A. Pinamonti and I. D. Platis and D. Tsolis},
 title = {Horizontal curvatures of surfaces in {3D} contact sub-{Riemannian} {Lie} groups},
 year = {2026},
 howpublished = {Preprint, {arXiv}:2603.08330 [math.{DG}] (2026)},
 url = {https://arxiv.org/abs/2603.08330},
 arXiv = {arXiv:2603.08330}
}

@book{A,
    AUTHOR = {T. Aubin},
     TITLE = {A course in differential geometry},
    SERIES = {Graduate Studies in Mathematics},
    VOLUME = {27},
 PUBLISHER = {American Mathematical Society, Providence, RI},
      YEAR = {2001},
       DOI = {10.1090/gsm/027}
}

@book{BG-SG,
    AUTHOR = {C.P. Boyer and K. Galicki},
     TITLE = {Sasakian geometry},
    SERIES = {Oxford Mathematical Monographs},
 PUBLISHER = {Oxford University Press, Oxford},
      YEAR = {2008}
}

@book{DT-CR,
    AUTHOR = {S. Dragomir and G. Tomassini},
     TITLE = {Differential geometry and analysis on {CR} manifolds},
    SERIES = {Progress in Mathematics},
    VOLUME = {246},
 PUBLISHER = {Birkh\"auser Boston},
      YEAR = {2006},
       DOI = {10.1007/0-8176-4483-0}
}

@article{E-N,
 author = {Eastwood, M. and Neusser, K.},
 title = {A canonical connection on sub-{Riemannian} contact manifolds.},
 fjournal = {Archivum Mathematicum},
 journal = {Arch. Math. (Brno)},
 issn = {0044-8753},
 volume = {52},
 number = {5},
 pages = {277--289},
 year = {2016},
 language = {English},
 doi = {10.5817/AM2016-5-277},
 zbMATH = {6674904},
 Zbl = {1413.53078}
}

@book{Bejancu1986,
  author    = {A. Bejancu},
  title     = {Geometry of {CR}-submanifolds},
  publisher = {Kluwer Academic Publishers},
  year      = {1986},
  address   = {Dordrecht}
}

@book{Blair2010,
  author    = {D. E. Blair},
  title     = {Riemannian Geometry of Contact and Symplectic Manifolds},
  series    = {Progress in Mathematics},
  volume    = {203},
  publisher = {Birkh{\"a}user},
  year      = {2010},
  edition   = {2nd}
}

@article{Dragomir1995,
  author  = {S. Dragomir},
  title   = {On the second fundamental form of a {CR} submanifold},
  journal = {Annali di Matematica Pura ed Applicata},
  volume  = {169},
  number  = {1},
  pages   = {55--83},
  year    = {1995},
  doi     = {10.1007/BF01760634}
}

@article{Lee1988,
  author  = {J. M. Lee},
  title   = {The {Fefferman} metric and pseudo-{Hermitian} invariants},
  journal = {Transactions of the American Mathematical Society},
  volume  = {305},
  number  = {1},
  pages   = {1--22},
  year    = {1988},
  doi     = {10.1090/S0002-9947-1988-0916964-0}
}

@book{Tanaka1975,
  author    = {N. Tanaka},
  title     = {A differential geometric study on strongly pseudo-convex manifolds},
  publisher = {Kinokuniya Book Store},
  year      = {1975},
  address   = {Tokyo}
}

@article{ChernMoser1974,
  author  = {S. S. Chern and J. K. Moser},
  title   = {Real hypersurfaces in complex manifolds},
  journal = {Acta Mathematica},
  volume  = {133},
  number  = {1},
  pages   = {219--271},
  year    = {1974},
  doi     = {10.1007/BF02392144}
}

@article{CHMY2005,
 author = {Hwang, J.-F. and Cheng, J.-H. and Malchiodi, A. and Yang, P.},
 title = {Minimal surfaces in pseudohermitian geometry},
 fjournal = {Annali della Scuola Normale Superiore di Pisa. Classe di Scienze. Serie V},
 journal = {Ann. Sc. Norm. Super. Pisa, Cl. Sci. (5)},
 issn = {0391-173X},
 volume = {4},
 number = {1},
 pages = {129--177},
 year = {2005},
 language = {English},
 url = {https://eudml.org/doc/84552},
 zbMATH = {5058794},
 Zbl = {1158.53306}
}

@article{DGN2001,
 author = {Danielli, D. and Garofalo, N. and Nhieu, D. M.},
 title = {Sub-{Riemannian} calculus on hypersurfaces in {Carnot} groups},
 fjournal = {Advances in Mathematics},
 journal = {Adv. Math.},
 issn = {0001-8708},
 volume = {215},
 number = {1},
 pages = {292--378},
 year = {2007},
 language = {English},
 doi = {10.1016/j.aim.2007.04.004},
 zbMATH = {5196014},
 Zbl = {1129.53017}
}
\Addresses

\end{document}